\newtheorem{theorem}{Theorem}[section]
\newtheorem{lemma}[theorem]{Lemma}
\newtheorem{coro}[theorem]{Corollary}
\newtheorem*{theorem*}{Theorem} 
\newtheorem*{definition*}{Definition}
\newtheorem*{prop*}{Proposition}
\newtheorem*{corollary*}{Corollary}
\newtheorem{ltheorem}{Theorem} 
\newtheorem{example}[theorem]{Example}
\newtheorem{rem}{Remark}[section]
\newcommand{\R}{\mathbb{R}}
\newcommand{\Z}{\mathbb{Z}}
\newcommand{\N}{\mathbb{N}}
\title{Recoding the Classic Hénon-Devaney Map}
\author{Fernando Lenarduzzi}
\thanks{Lenarduzzi is a Post-Doc at UFSCar}
\date{\today}
\address{Universidade Federal de São Carlos, Jardim Guanabara, 13565905 - São Carlos, SP, Brazil.}
\email{}
\begin{document}

\begin{abstract}
In this work we are going to consider the classical Hénon-Devaney map given by
\begin{eqnarray*}
f: \R^2\setminus \{y=0\} &\rightarrow& \R^2 \\
(x,y) &\mapsto& \left(x+\dfrac{1}{y}, y-\dfrac{1}{y}-x\right)
\end{eqnarray*}
We are going to construct conjugacy to a subshift of finite type, providing a global understanding of the map's behavior.We extend the coding to a more general class of maps that can be seen as a map in a square with a fixed discontinuity. 
\end{abstract}

\maketitle

\section{Introduction}

Let us recall the definition of the Boole's map
\begin{eqnarray*}
B: \R \setminus \{x=0\} &\rightarrow& \R \\
x &\mapsto& x - \dfrac{1}{x}
\end{eqnarray*}
which preserves the Lebesgue measure in the real line. The ergodicity of $B$ was proved in 1973 by Adler and Weiss in \cite{Adler}. Some one-dimensional generalizations of this map were studied by S. Mu\~{n}oz in his Ph.D. Thesis \cite{Munoz}, by turning the asymptote at the ``infinity'' into a repelling point.

H\'{e}non's Generating Families and his approach to the Restricted Three-Body Problem has been studied exhaustively by different areas. The asymptotic behavior to ``truncated solutions'' of the problem, presented in \cite{Henon}, is given by
\begin{eqnarray*}
f: \mathbb{R}^2\setminus \{y=0\} &\rightarrow& \mathbb{R}^2 \setminus \{y=-x\}\\
(x,y) &\mapsto& \left(x+\dfrac{1}{y}, y-\dfrac{1}{y}-x\right)
\end{eqnarray*}

This is known today as the \emph{H\'{e}non-Devaney map}, due the work done by Devaney in his paper \cite{Devaney}, in which he constructed a topological conjugation of $f$ to the Baker Transformation. It is clear the resemblance between $B$ and $f$, and that is the reason why $f$ is considered to be the \emph{two-dimensional version} of Boole's map. It is also easy to see that $f$ preservers the Lebesgue measure in the plane and it is natural to ask about its ergodicity. This was asked by Devaney in his paper in 1981 and yet remains open.

An easy remark is that $Df$ is parabolic with only one invariant direction.

\subsection{Statement of results}

We have the following coding for the Hénon-Devaney Map  

\begin{ltheorem}\label{teoB}
There exists $\Sigma_i, \Sigma_j \varsubsetneq \Sigma$ and a surjective map $h:\mathbb{R}^2 \rightarrow \Sigma_i \times \Sigma_j$ such that the following diagram commutes
\begin{displaymath}
\xymatrix{
{\mathbb{R}^2} \ar[r]^{f} \ar[d]_{h} & {\mathbb{R}^2 \ar[d]^{h}} \\
{\Sigma_i \times \Sigma_j} \ar[r]_{\sigma}  & {\Sigma_i \times \Sigma_j}}
\end{displaymath}
where $$\sigma:\Sigma_i \times \Sigma_j \rightarrow \Sigma_i \times \Sigma_j$$ is the product of the usual shift maps restricted to each one of its respective spaces, that induces a subshift for $\Sigma_i \times \Sigma_j$ and
{\footnotesize
$$\Sigma := \bigcup_{m,n \in \overline{\mathbb{N}}\cup\{0\}} \{(s_{-m}\dots,s_{-1};s_0,s_1,\dots,s_n) ; s_i\in\{ -2,-1,1,2\}, s_{-m}=0 \mbox{ or } s_n=0\}$$}
\end{ltheorem}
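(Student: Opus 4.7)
\emph{Proof plan.} The plan is to build a two-sided symbolic itinerary for each point by using a partition of $\R^2$ induced by the two discontinuity loci of the dynamics. The forward map $f$ is undefined on $\{y=0\}$, and a short computation shows that $f^{-1}(X,Y)=(X-1/(X+Y),\,X+Y)$, so the inverse is undefined on $\{x+y=0\}$. These two lines partition $\R^2$ into four open regions, which I label with the four symbols $\{-2,-1,1,2\}$: the sign of $y$ determines the sign of the label, while the sign of $x+y$ determines whether the magnitude is $1$ or $2$.

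With this partition fixed, I define $h(p)=(h^{-}(p);h^{+}(p))$, where $h^{+}(p)=(s_0,s_1,\ldots)$ with $s_n$ the label of the region containing $f^{n}(p)$, terminated by a $0$ if some $f^{n}(p)$ lands on $\{y=0\}$. The past $h^{-}(p)=(\ldots,s_{-2},s_{-1})$ is defined analogously using $f^{-1}$, with a $0$ inserted whenever a backward iterate lands on $\{x+y=0\}$. The commutativity $\sigma\circ h=h\circ f$ is then immediate from the construction, since advancing one step along the $f$-orbit shifts which symbol carries the index $0$.

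To identify the image, I would compute $f(R)$ and $f^{-1}(R)$ for each of the four rectangles $R$, intersect with the partition, and read off the admissible one-step transitions; these transitions, together with the rule that the symbol $0$ may appear only at an outer end of a finite sequence, define the subshifts $\Sigma_i,\Sigma_j\subsetneq\Sigma$ of admissible futures and admissible pasts. Surjectivity of $h$ onto $\Sigma_i\times\Sigma_j$ will follow from the standard nested-cylinder argument: given an admissible pair of sequences, the intersection of the cylinders $\{p\,:\,f^{n}(p)\in R_{s_{n}}\}$ taken over all indices is nonempty and shrinks to a single point.

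The principal difficulty lies in this last shrinking-cylinder step, because $Df$ degenerates toward the identity as $|y|\to\infty$, so the uniform-hyperbolicity argument that would classically force the diameters of the cylinders to zero is not directly available. To handle this I plan to exploit the explicit form of $f$: the instantaneous expansion factor is controlled by $1/y^{2}$, so the accumulated contraction along an orbit depends on how frequently and how closely the orbit visits the region of small $|y|$. Translating this geometric condition into combinatorial conditions on the sequence is precisely what produces the proper-subshift restrictions $\Sigma_i,\Sigma_j\subsetneq\Sigma$, while simultaneously ensuring that each admissible bi-sequence is realized by a unique point and so yielding the desired surjection.
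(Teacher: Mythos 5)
There is a genuine gap, and it is structural. Your coding assigns to each point a \emph{single} symbol (determined by the signs of $y$ and of $x+y$) and hence a single two-sided itinerary $(\ldots s_{-1};s_0 s_1\ldots)$; you then read ``$\Sigma_i\times\Sigma_j$'' as ``admissible futures $\times$ admissible pasts.'' That is not what the theorem asserts. In the statement, $\Sigma_i$ and $\Sigma_j$ are each subsets of $\Sigma$, whose elements are themselves two-sided (possibly truncated) sequences, and $\sigma$ is the product of the \emph{usual} shift on each factor; note also that under your reading the ``shift'' on the space of pasts is not even defined without importing $s_0$ from the other factor. The paper's $h=(h_i,h_j)$ genuinely produces two two-sided sequences per point, one recording the position relative to the lamination by preimages $f^{-n}(\{y=0\})$ and the other relative to the lamination by images $f^{n}(\{y=-x\})$; this is why most of the paper is devoted to proving that these two families of curves are monotone, disjoint, transverse, and sweep out the plane, and to building the nested coordinates $(i_0\oplus i_1\oplus\cdots,\,j_0\oplus j_1\oplus\cdots)$ from them. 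Relatedly, your partition uses only the zeroth-level curves $\{y=0\}$ and $\{x+y=0\}$, whereas the paper's symbols $\pm1$ versus $\pm2$ distinguish the strip between $\{y=0\}$ and $f^{-1}(\{y=0\})$ (respectively between $\{y=-x\}$ and $f(\{y=-x\})$) from its complement --- i.e.\ they flag whether the next iterate crosses a discontinuity. These are different partitions and yield different subshifts; it is not clear your itineraries even land in the stated $\Sigma$.

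The second problem is where you place the load-bearing step. You make surjectivity rest on a shrinking-cylinder argument and acknowledge that uniform hyperbolicity fails because $Df$ is parabolic; but shrinking-to-a-point is exactly what the paper \emph{cannot} prove and explicitly leaves as a conjecture --- it is needed only for injectivity, which the theorem does not claim. For surjectivity one needs nonemptiness of the nested intersections (nontrivial in the noncompact plane), and the paper obtains this from the explicit geometric lemmas: monotonicity of each coordinate of $f^{-n}(t,0)$ in the parameter, disjointness of consecutive preimages, divergence of the sequence $f^{-n}_y(t_n,0)$ (Lemma \ref{LemmaL}), and Corollary \ref{insidecurves}, which together show that every admissible coordinate string corresponds to a nonempty region bounded by actual curves of the two transverse laminations. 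Your plan would need to replace the hyperbolicity estimate by this kind of direct geometric control of the exceptional curves; as written, the step that defines $\Sigma_i,\Sigma_j$ and delivers surjectivity is deferred to an argument that is both unavailable and unnecessary.
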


Remark that the map $h$ is defined in the whole plane. However, if we consider only points whose full orbit is defined, we get a  continuous surjective map
$$h: \mathbb{R}^2  \setminus \bigcup_{n\in\mathbb{N}}f^{-n}(\{y=0\})\cup\bigcup_{n\in\mathbb{N}}f^{n}(\{y=-x\}) \rightarrow \Sigma_i' \times \Sigma_j' $$
where $ \Sigma_i', \Sigma_j' \subset \{-2,-1,1,2\}^\Z $


The set $\Sigma$ consists of sequences that are either bi-infinity with only $\{-2,-1,1,2\}$, or with finite positive part ending in zero, or finite negative part starting in zero or finite sequence starting and ending with zero.

It is important to stress that the map $h$ is only surjective, however we conjecture that we can get a full homeomorphism. All is left to prove is that you have that each one of the ``squares'' that defines the new coordinates converges to a single point. There is strong evidence we have that because the map is hyperbolic. This is discussed more in the comming paper \cite{Pujals2}, a joint work with Ernique Pujals.

Inspired by \cite{Adler} and \cite{Devaney}, we focus on trying to understand how the images and pre-images of each discontinuity spread throughout the plane. We look at the discontinuities turning them into a new system of coordinates of the plane. The Devaney's Theorem can be stated as

\begin{theorem*}
There exists $\Sigma\varsubsetneq \{-1,0,1\}^\Z$ and $h:\mathbb{R}^2 \setminus ( \cup_{\{n\in\mathbb{N}\}}f^{-n}(\{y=0\})\cup\cup_{\{n\in\mathbb{N}\}}f^{n}(\{y=-x\})) \rightarrow \Sigma$ a surjective map such that the following diagram commutes
\begin{displaymath}
\xymatrix{
{\mathbb{R}^2} \setminus \{y=0\} \ar[r]^{f} \ar[d]_{h} & {\mathbb{R}^2 \ar[d]^{h}} \setminus \{y=-x\} \\
{\Sigma} \ar[r]_{\sigma}  & {\Sigma}}
\end{displaymath}
where $$\sigma:\Sigma \rightarrow \Sigma$$ is the usual shift map restricted to each one of its respective spaces.
\end{theorem*}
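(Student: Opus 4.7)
The plan is to follow Devaney's strategy \cite{Devaney} by building a finite partition of $\mathbb{R}^2\setminus\{y=0\}$ whose atoms are labelled by $\{-1,+1\}$, and assigning the extra symbol $0$ to the discontinuity line $\{y=0\}$ and to its dual $\{y=-x\}$. The partition is dictated by the geometry of the images and preimages of the two singular lines: their first forward and backward iterates cut $\mathbb{R}^2$ into regions on each of which $f$ is a diffeomorphism onto another region of the partition. For a point $p$ whose full orbit avoids both families of iterates, I would define $h(p)=(s_n)_{n\in\Z}$ by letting $s_n\in\{-1,+1\}$ be the label of the atom containing $f^n(p)$. For orbits that terminate in forward time on $\{y=0\}$ after $N$ steps, or in backward time on $\{y=-x\}$ after $N$ steps, I would truncate the sequence and insert the symbol $0$ at the terminal index; this produces exactly a sequence of the shape appearing in the description of $\Sigma$.

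Commutation $h\circ f=\sigma\circ h$ is then immediate from the definition, since shifting the index by one simply re-bases the itinerary at $f(p)$. To identify $\Sigma\subsetneq\{-1,0,1\}^\Z$ as a subshift, I would enumerate the allowed two-symbol transitions, determined by checking when $f$ maps atom $R_i$ into atom $R_j$; the finiteness of the partition, together with the role of $0$ as a terminal symbol, yields the claimed subshift of finite type. For surjectivity, given an admissible $\underline{s}\in\Sigma$, I would construct a realising point as the intersection of the nested cylinders
\[
C_n(\underline{s})=\bigcap_{|k|\le n}f^{-k}(R_{s_k}),
\]
first showing that each $C_n$ is non-empty (which is exactly what admissibility encodes) and then that the nested sequence has non-empty intersection in the plane.

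The main obstacle is that last step. Because $Df$ is only parabolic with a single invariant direction, the cylinders $C_n$ need not shrink to points and the usual hyperbolic intersection argument does not apply directly. I would circumvent this by passing to a compactifying change of coordinates that controls the behaviour near the two singular lines, and then arguing that the closures $\overline{C_n}$ form a decreasing sequence of non-empty compact sets, so that $\bigcap_n\overline{C_n}\neq\emptyset$ by the finite intersection property. This delivers the surjectivity of $h$, which is all that the stated theorem asserts; full injectivity would require that the intersection actually reduces to a single point, and that is precisely the conjecture recorded in the remark after Theorem \ref{teoB}.
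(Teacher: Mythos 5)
First, a point of orientation: the paper does not prove this statement at all --- it is quoted as Devaney's theorem from \cite{Devaney}, and the paper's own work goes into the refined coding of Theorem \ref{teoB}. So your proposal must be judged against the machinery that actually makes such itinerary constructions work, and there the decisive steps are exactly the ones you leave unproved. Defining $h$ as an itinerary map and checking $h\circ f=\sigma\circ h$ is indeed immediate; the content lies in (i) the Markov structure of the partition and (ii) surjectivity. For (i) you simply assert that the forward and backward iterates of the singular lines cut $\mathbb{R}^2$ into regions each of which $f$ carries diffeomorphically onto another region of the partition. That is precisely what has to be established, and it is what the paper's Section 2.1 is devoted to: Lemma \ref{Lemma2.3} (monotonicity and disjointness of the curves $f^{-n}(\{y=0\})$), Lemma \ref{LemmaL} (divergence of the sequence $(f^{-n}_y(t_n,0))_n$), and Corollary \ref{insidecurves} (collapse of the curves $f^{-1}(D_n)$) together show that the two families of exceptional curves are ordered, transverse, and sweep out the whole plane with a definite combinatorics. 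Without these facts you cannot enumerate the admissible transitions, nor know that a finite admissible cylinder is non-empty.

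For (ii), the compactness argument does not close as stated. The cylinders $C_n(\underline{s})$ are unbounded, so their closures in $\mathbb{R}^2$ are not compact; after an unspecified compactification the nested intersection $\bigcap_n\overline{C_n}$ may consist entirely of ideal points at infinity, or of points lying on one of the exceptional curves --- and such a point lies outside the stated domain of $h$ and need not realize the prescribed itinerary (its code can acquire a $0$ or belong to an adjacent atom). The paper avoids this entirely: surjectivity follows because every coordinate region $(i_0\oplus i_1\oplus\cdots,\, j_0\oplus j_1\oplus\cdots)$ is shown to be non-empty by the covering properties of the laminations, so every admissible sequence is by construction the code of an actual point; no shrinking-cylinder limit is needed for surjectivity, and the question of whether the nested regions shrink to a single point is exactly the injectivity statement that the paper explicitly leaves as a conjecture. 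There is also a small inconsistency in your set-up: points whose orbit terminates on a singular line are excluded from the domain in the statement you are proving, so the terminal symbol $0$ you introduce never occurs for points you are allowed to code.
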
 

The new coding differs a little bit of the conjugacy induced by Devaney in his paper, that is, in his paper he induces a coding that involves only 3 symbols and do not see \emph{explicitly} at each iterate where in the plane the point is at the exact time $n\in\Z$, which can be stated in the following corollary

\begin{coro}
The $i$-coordinate and the $j$-coordinate code the information of the distance to the $x$-axis and the $y$-axis, respectively.
\end{coro}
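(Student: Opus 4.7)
The plan is to trace directly through the construction of $h$ carried out in the proof of Theorem~\ref{teoB}. In that construction the $i$-sequence of $h(p)$ records, for each $n$, the atom of a generating partition of the plane that contains $f^n(p)$. This generating partition is produced from the only discontinuity of $f$, the $x$-axis $\{y=0\}$, together with two additional horizontal lines $y=\pm 1$ which are forced by the scaling behavior of $f$ and which are exactly what allows four symbols $\{-2,-1,1,2\}$ rather than just two. Its atoms are thus the four horizontal strips $\{y<-1\}$, $\{-1<y<0\}$, $\{0<y<1\}$, $\{y>1\}$. Consequently the sign of $i_n$ records on which side of the $x$-axis $f^n(p)$ lies, and $|i_n|$ distinguishes $|y|<1$ from $|y|>1$. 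This gives the first half of the statement: the $i$-coordinate encodes the coarse distance to the $x$-axis, step by step.

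For the $j$-coordinate I would run the analogous argument with $f^{-1}$ in place of $f$. From $f^{-1}(x,y)=\bigl(x-\tfrac{1}{x+y},\,x+y\bigr)$ one sees that the discontinuity of the inverse is the line $\{y=-x\}$, which plays for $f^{-1}$ exactly the role that $\{y=0\}$ plays for $f$. Applying the construction of Theorem~\ref{teoB} to $f^{-1}$ therefore yields a generating partition whose atoms, after untwisting the coordinate change $u=x+y$ implicit in the formula for $f^{-1}$, are vertical strips bounded by the $y$-axis together with the auxiliary lines $x=\pm 1$. Hence $j_n$ records on which side of the $y$-axis $f^{-n}(p)$ sits, and whether $|x|$ is smaller or larger than $1$, which is the second half of the statement.

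The only delicate step is a bookkeeping one: matching, symbol by symbol, the labels $\{-2,-1,1,2\}$ assigned during the construction of $h$ with the geometric horizontal (resp.\ vertical) strips just described, so that the corollary holds literally and not merely up to relabeling. I expect the bulk of the bookkeeping to lie in the backward direction, because the natural strips for $f^{-1}$ are not obviously aligned with the $y$-axis until one rewrites the inverse in the coordinate $u=x+y$; once that identification is carried out, the identification of $|j_n|$ with proximity to the $y$-axis is immediate.
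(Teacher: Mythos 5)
There is a genuine gap: you have substituted a partition of your own invention for the one the paper actually uses, and the corollary is false for yours in the paper's sense. In the paper the symbol attached to the $i$-coordinate at time $n$ is determined by the index $i_0$ of $f^n(p)$ in the system of coordinates built from the laminations $\{R_i\}$ and $\{L_j\}$: the boundary between the symbols $1$ and $2$ is the curve $R_1=f^{-1}(\{y=0\})=\{(t-\tfrac{1}{t},t):t\in\R\}$ (the Boole graph with the axes swapped), not the horizontal line $y=1$. These are very different sets: the points $(0,\tfrac12)$ and $(-10,\tfrac12)$ lie in the same strip $0<y<1$ of your partition but on opposite sides of $R_1$, hence carry different symbols in the paper's coding. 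So the atoms of the relevant ``partition'' are the regions between consecutive pre-images of the $x$-axis, and ``distance to the $x$-axis'' must be read combinatorially (how many of the curves $R_i$ separate the point from $\{y=0\}$, equivalently how many iterates until the orbit enters the region between $R_{-1}$ and $R_1$), not as Euclidean distance truncated at $\pm1$. The same objection applies, more severely, to your treatment of the $j$-coordinate: the $L_j$ are the forward images $f^{j}(\{y=-x\})$, which are decreasing curves, and nothing in the paper reduces them to vertical lines $x=\pm1$ via the substitution $u=x+y$; that step is asserted, not carried out, and it is exactly where your argument would have to do real work. Finally, calling the partition ``generating'' presumes the separation property that the paper explicitly leaves as a conjecture (only a semi-conjugacy is proved), so your framing assumes more than is available.

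For comparison, the paper offers no formal proof of this corollary at all: it is a one-line reading of the construction in the section on new coordinates. The honest argument is simply that, by definition of $h_i$, the symbol $s_{i_n}$ is $\pm1$ precisely when $f^n(p)$ lies in the region bounded by $\{y=0\}$ and the first pre-image $R_{\pm1}$, and $\pm2$ otherwise, with the sign recording the side of the anti-diagonal/axis; hence the $i$-sequence records at each instant the position of the orbit within the lamination by pre-images of the $x$-axis. The analogous statement for $h_j$ and the images of $\{y=-x\}$ gives the second half. If you rewrite your proof with the curves $R_i$, $L_j$ in place of the straight lines, and drop the generating-partition claim, it becomes the paper's (implicit) argument.
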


The two ``coordinates'' given by the product of shifts describes that dynamics along each one of the axis, that can be seen as the point's position relatively to the pre-image of $\{y=0\}$ and the image of $\{y=-x\}$.

What is really important about these images and pre-images is that they form a dense \emph{lamination} of smooth curves in the plane that are transverse to each other, even though we rely on the algebraic form for describing the properties of the curves. It is these geometrical properties that allow us to construct the semi-conjugacy. 

We can extend these results to a map $f:X\setminus\{y=0\}\rightarrow X$ where $X=[-1,1]^2$ or  the Klein Bottle $X=\mathbb{K}$ and such that the map restricted to the sets $(0,1]\times[-1,1]$ and $[-1,0)\times[-1,1]$ are continuous. The union of these sets is mapped bijectively into the square minus a curve connecting the points $(-1,1)$ and $(1,-1)$ in a way that each $(x,0^-)$ is ``mapped" into $(-1,1)$ and the ``image" of $(x,0^+)$ is $(1,-1)$ where
$$
\left\{
\begin{array}{rcl}
(x,0^-)= \lim_{y\rightarrow 0^-} (x,y)\\
(x,0^+)= \lim_{y\rightarrow 0^+} (x,y)
\end{array}\right.
$$

We require some additional properties to these maps. First, we want that the map preserves the boundary, that is, $f(\partial X)\subset \partial X$ and even more, we want that the corner point $(-1,1)$ is parabolic fixed points in the case of the Klein Bottle and $(-1,1)$ and $(1,-1)$ are repealing fixed points for the square. For the latter, we also want that the remaining corner points are attracting.

In the example bellow, the discontinuity is mapped into $\{y=-x\}$ and we have a representation of the images of the curves $(x_0,y)$, for $y<0$ and a fixed $x_0$.

\begin{figure}[h!]\begin{center}\includegraphics[scale=.65]{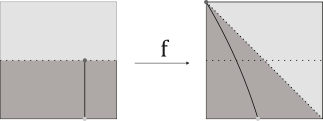}\caption{The map $f$ and the discontinuity.}\end{center}\end{figure}

Here we can state almost the same theorem as before.

\begin{ltheorem}
There exists $\Sigma_i, \Sigma_j \varsubsetneq \Sigma$, the same as in the first theorem, and $h:X \rightarrow \Sigma_i \times \Sigma_j$ a surjective map such that the following diagram commutes
\begin{displaymath}
\xymatrix{
{X}\ar[r]^{f} \ar[d]_{h} & {X \ar[d]^{h}}  \\
{\Sigma_i \times \Sigma_j} \ar[r]_{\sigma}  & {\Sigma_i \times \Sigma_j}}
\end{displaymath}
where $f(\{y=0\})=\partial \overline{f(0,1]\times[-1,1])}\cap \partial \overline{f([-1,0)\times[-1,1])}$ and
$$\sigma:\Sigma_i \times \Sigma_j \rightarrow \Sigma_i \times \Sigma_j$$
is the product of the usual shift maps, which induces a subshift for $\Sigma_i \times \Sigma_j$. If restricted to the set of the continuity points of $f$ then
$$h: X  \setminus \bigcup_{n\in\mathbb{Z}}f^{-n}(\{y=0\})\rightarrow \Sigma_i' \times \Sigma_j' $$
is a semi-conjugacy and $ \Sigma_i', \Sigma_j' \subset \{-2,-1,1,2\}^\Z $.
\end{ltheorem}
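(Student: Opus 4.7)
The plan is to mirror the construction used for Theorem~\ref{teoB}, transferring the role played there by $\{y=0\}$ and its image $\{y=-x\}$ to the fixed discontinuity $D:=\{y=0\}\subset X$ and its image $\Gamma := f(D)$, the arc joining the two distinguished corners. First I would build the two transverse laminations
\[
\mathcal{L}^- := \bigcup_{n\geq 0} f^{-n}(D), \qquad \mathcal{L}^+ := \bigcup_{n\geq 0} f^{n}(\Gamma).
\]
The assumed continuity of $f$ on each half-square and the boundary invariance $f(\partial X)\subset\partial X$ force each leaf to be a smooth arc with endpoints on $\partial X$. Transversality is inherited from the combinatorics of consecutive iterates of $D$, exactly as in the H\'enon--Devaney case.

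With the laminations in hand, I would code points by their position relative to these two families. The complement $X\setminus (D\cup\Gamma)$ splits into four connected components, which I label by $\{-2,-1,1,2\}$; for $p\in X$ and $n\in\Z$, let $s_n(p)$ be the label of the region containing $f^{n}(p)$, with the convention that $s_n(p)=0$ when $f^{n}(p)\in D$, so the orbit is not defined beyond index $n$. Separating the symbols coming from forward iteration (which track position with respect to leaves of $\mathcal{L}^+$) from those coming from backward iteration (position relative to $\mathcal{L}^-$) produces $h(p)=(h_i(p),h_j(p))\in\Sigma_i\times\Sigma_j$. Equivariance $h\circ f=\sigma\circ h$ is immediate from the definition, and the admissible words are cut out by the same finite combinatorial rules as in Theorem~\ref{teoB}, so the shift space $\Sigma$ is unchanged.

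The hard part will be verifying that $\mathcal{L}^\pm$ fill $X$ densely enough for $h$ to be both well defined and surjective. In the classical setting this relies on explicit algebraic control of the curves; here one must extract the same conclusion from the prescribed boundary dynamics. In the square case the repelling corners $(-1,1)$ and $(1,-1)$ spread the endpoints of leaves of $\mathcal{L}^-$ along $\partial X$, while the attracting corners absorb the endpoints of $\mathcal{L}^+$; in the Klein bottle case the parabolic fixed corner plays the role of ``infinity'' of the original map. Once density is established, surjectivity follows by the standard argument: each admissible word corresponds to a nested family of rectangles cut out by finitely many leaves of $\mathcal{L}^\pm$, and these have nonempty compact intersection. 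Finally, restricting $h$ to the continuity set $X\setminus\bigcup_{n\in\Z}f^{-n}(D)$ removes the symbol $0$ from every coordinate, so the restriction lands in $\{-2,-1,1,2\}^{\Z}\times\{-2,-1,1,2\}^{\Z}$ and is a continuous semi-conjugacy as claimed.
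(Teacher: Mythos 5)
Your overall route is the one the paper takes: transfer the H\'enon--Devaney construction wholesale, building the two laminations $\bigcup_{n}f^{-n}(D)$ and $\bigcup_{n}f^{n}(\Gamma)$ and reading off an itinerary, with the corner hypotheses (repelling/attracting corners for the square, the parabolic corner for the Klein bottle) standing in for the behaviour at infinity that drives the divergence and covering lemmas in the planar case. The paper's own proof of this theorem is essentially just that remark, so at the level of strategy you agree with it.

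The concrete coding you propose, however, is not the one that lands in $\Sigma_i\times\Sigma_j$, and I do not think it can be made to. You assign to each $n$ a \emph{single} symbol recording which of the four components of $X\setminus(D\cup\Gamma)$ contains $f^{n}(p)$, and then try to split the resulting sequence into a forward and a backward half. In the paper each iterate receives a \emph{pair} of symbols: the $i$-symbol of $f^{n}(p)$ is read off from its position relative to the three curves $R_{-1}$, $R_{0}=D$, $R_{1}$ (that is, relative to $D$ \emph{and the two branches of} $f^{-1}(D)$), with $\pm1$ meaning the point lies in the strip adjacent to the discontinuity (so it crosses it at the very next iterate) and $\pm2$ meaning it does not; the $j$-symbol is read off analogously from $L_{-1}$, $L_{0}=\Gamma$, $L_{1}$, i.e.\ from $\Gamma$ and $f(\Gamma)$. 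This $1$-versus-$2$ distinction is exactly what records the return times to the region of interest and cuts out the subshift $\Sigma$ of Theorem~\ref{teoB}; the four quadrants of $D\cup\Gamma$ simply do not see it. Moreover your ``separation'' produces two one-sided sequences, whereas $\Sigma_i$ and $\Sigma_j$ consist of two-sided sequences on which the product shift acts coordinatewise, so as written the diagram cannot commute. To repair the argument, define the symbol pair at each time from the regions cut out by $\{f^{-1}(D),D\}$ and by $\{\Gamma,f(\Gamma)\}$ (equivalently, from the sign and modulus of $i_0$ and $j_0$ in the $\oplus$-coordinates), and keep both resulting bi-infinite sequences; the remainder of your outline (density from the corner dynamics, surjectivity by nested rectangles, appending the symbol $0$ exactly on $\bigcup_{n}f^{-n}(D)$) then goes through as in the planar case.
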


The proof follows the same path as before, we use the discontinuities to determine where the dynamics change. There is no need for the discontinuity to be $\{y=0\}$, it can be taken as any simple curve that connects $[-1,1]\times \{-1\}$ and $[-1,1]\times \{1\}$, a graph over the base of the square. The same can be stated to its ``image'', can be any simple curve that connects $\{1\}\times[-1,1] $ and $\{-1\}\times[-1,1]$, a decreasing graph over $\{-1\}\times [-1,1]$.

\section{The Semi-Conjugacy}

We will introduce a system of coordinates that is induced using the pre-images and images of the discontinuities of the H\'{e}non-Devaney map. The coordinates will help us to describe the dynamics in a symbolic way, which will help us to understand exactly the itineraries of each point relative to the position within the curves.

We want to find a conjugacy between the H\'{e}non-Devaney map and a symbolic model that tells us where exactly we are in the plane at each moment. We want to give a description of how the orbits visit a determined region in the plane via the conjugacy of the map and a product of two two-sided subshifts of finite type. 


\subsection{The Discontinuities}

Let us investigate the images of the discontinuities of $f$ and $f^{-1}$, namely the exceptional curves. The main goal is to establish the following result

\begin{lemma}\label{Lemma1}
The set of curves given by $f^{-n} (\{y=0\}), n\in \mathbb{N}$ and $f^{n} (\{y=-x\}), n\in \mathbb{N}$ form two laminations of the plane such that the elements of one is transverse to the elements of the other.
\end{lemma}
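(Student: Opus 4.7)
The plan is to argue in three stages: first, the two families $\gamma_{-n}:=f^{-n}(\{y=0\})$ and $\delta_n:=f^n(\{y=-x\})$ are each pairwise disjoint collections of smooth curves; second, at any intersection point a leaf of one family is transverse to a leaf of the other; third, the union in each family inherits a continuous local product structure and hence deserves to be called a lamination. Disjointness is immediate: if $p\in\gamma_{-n}$ with $n\ge 0$, then $f^n(p)\in\{y=0\}$, so $f^{n+1}(p)$ is undefined, and $p$ cannot lie in $\gamma_{-m}$ for $m>n$. Swapping $f$ with $f^{-1}$ and $\{y=0\}$ with $\{y=-x\}$ gives the analogous statement for the $\delta_n$. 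Smoothness of $\gamma_{-n}$ then follows by induction on $n$: $\gamma_0$ is smooth, and $f$ is a local diffeomorphism (note $\det Df=1$) on the open set $\{y\ne 0\}$, so $\gamma_{-(n+1)}=f^{-1}(\gamma_{-n}\setminus\{y=-x\})$ is smooth whenever $\gamma_{-n}$ is. The argument for the $\delta_n$ is symmetric.

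For the transversality I would parameterise non-vertical tangent directions by slope $m=dy/dx$ and compute directly how $Df$ and $Df^{-1}$ act on them. A short calculation yields
\[
Df^{-1}\colon m\longmapsto \frac{y^2(1+m)}{y^2+1+m},\qquad Df\colon m\longmapsto \frac{m(y^2+1)-y^2}{y^2-m}.
\]
The first formula sends $[0,\infty)$ into itself, since both numerator and denominator are positive when $m\ge 0$; the second sends $(-\infty,-1]$ into itself, as for $m\le-1$ one has $y^2-m>0$ and the inequality $(m(y^2+1)-y^2)-(-1)(y^2-m)=m y^2\le 0$ gives image $\le -1$. Starting from the horizontal tangent on $\gamma_0$ and iterating the first recursion, $T\gamma_{-n}$ has slope in $[0,\infty)$ for every $n$; starting from the slope $-1$ on $\delta_0$ and iterating the second, $T\delta_n$ has slope in $(-\infty,-1]$ for every $n$. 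Consequently, at any intersection point $p\in\gamma_{-n}\cap\delta_m$ the two tangent directions lie in disjoint cones and the leaves cross transversely, which is the main conclusion of the lemma.

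The step I expect to be the main technical obstacle is upgrading \emph{``disjoint smooth curves with transverse tangent lines''} into a genuine lamination, because the leaves accumulate near the discontinuity loci $\{y=0\}$ and $\{y=-x\}$ and could \emph{a priori} pile up on themselves in pathological ways. The uniform slope control from the previous step is the tool I would use against this: since the tangent line fields of the two families stay in disjoint cones, each family may serve as a local transverse section to the other, and the continuity of $Df$ on its domain propagates any such local chart under iteration. This should produce a continuous local product structure $U\times T$ at each leaf and complete the lamination statement.
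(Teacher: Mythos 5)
Your computations for disjointness, smoothness and transversality are correct, and the transversality argument is genuinely different from the paper's. The paper never works with the derivative cocycle on slopes: it proves in Lemma \ref{Lemma2.3}(a) that both coordinates of $f^{-n}(t,0)$ are increasing functions of the parameter $t$ on each connected component (so the leaves of the first family are increasing curves), transfers this to the second family through the identity $f^{n}(t,-t)=(f^{-n}_x(t,0),-f^{-(n+1)}_y(t,0))$ of Remark \ref{Remark2.3} (so those leaves are decreasing), and reads off transversality from the signs of the slopes. Your invariant-cone computation reaches the same conclusion more directly and even yields the sharper cone $(-\infty,-1]$ for the image family; likewise your orbit-length argument for disjointness is cleaner and handles all pairs at once, whereas the algebraic manipulation in Lemma \ref{Lemma2.3}(b) only treats consecutive pre-images.

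The gap is the last stage, which you flag but do not close. ``Pairwise disjoint smooth curves with tangents in disjoint cones'' is not yet a lamination of the plane: each $f^{-n}(\{y=0\})$ splits into more and more components, which are graphs over different and shrinking domains with vertical asymptotes at the successive discontinuity parameters, so one still has to determine which component lies above which, where they accumulate, and that every point of the plane is sandwiched between consecutive leaves of each family. The cone estimate says nothing about this, and it is exactly here that the paper spends almost all of its effort: the induction showing that the $(n+1)$-st pre-image in a given sector lies strictly above the $n$-th, the divergence of the heights $f^{-n}_y(t_n,0)$ (Lemma \ref{LemmaL}), and the trapping of the branches $f^{-1}(D_n)$ between $\{y=0\}$ and $f^{-1}(\{y=0\})$ with pointwise convergence to $0$ (Corollary \ref{insidecurves}). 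That global ordering is what substantiates the word ``lamination,'' rules out the pathological accumulation you yourself identify as the obstacle, and feeds directly into the coordinate construction of the next subsection; your closing appeal to ``continuity of $Df$ propagating local charts'' does not substitute for it. (To be fair, the paper concedes that full density of the leaves is only conjectural, but the ordering and covering statements are proved there and are absent from your argument.)
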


\subsubsection{Pre-images of $\{y=0\}$}

Let's take the first step in that direction
$$f^{-1} (\{y=0\}) = \left\{ f^{-1}(t,0); t \in \R \right\} = \left\{\left(t-\dfrac{1}{t}, t\right); t\in \R\right\}$$
which is the Boole's graph with inverted axis.


Before analysing the other cases, let us first understand a few general aspects of $f^{-n}\{y=0\}$ which do not depend on the values of $x$. Denoting $f^{-2}(x,y) = (f^{-2}_x(x,y),f^{-2}_y(x,y))$, we have that each coordinate in $f^{-2}(t,0)$ is an increasing function of the parameter $t$
\begin{eqnarray*}
(f^{-2}_x)'&=& (f^{-1}_x)'+\dfrac{(f^{-1}_x)'+(f^{-1}_y)'}{(f^{-1}_x+f^{-1}_y)^2} \\
&=& 1+\dfrac{1}{t^2}+\dfrac{\left(2+\dfrac{1}{t^2}\right)}{(f^{-1}_x+f^{-1}_y)^2} > 0
\end{eqnarray*}
and it holds for all $t\in\R^\ast\setminus\{t ; f^{-1}_y(t,0)=-f^{-1}_x(t,0)\}$ and, analogously, we have the same for $f^{-2}_y(t,0)$.

Also it is easy to see that $f^{-2}(\{y=0\})\cap f^{-1}(\{y=0\})=\emptyset$ because if there exists $t_1\in\R^\ast$ and $t_2\in\R^\ast\setminus f^{-1}(\{y=-x\})$ such that
$$ \left\{\begin{array}{rcl}
t_1 - \dfrac{1}{t_1} &=& t_2 - \dfrac{1}{t_2} - \dfrac{1}{2t_2-\dfrac{1}{t_2}} \\
t_1 &=& 2t_2 - \dfrac{1}{t_2}
\end{array}\right.$$
that implies that $t_2=0$, which is absurd.

\begin{lemma}\label{Lemma2.3}
Using the notation $f^{-n}(t,0)=(f^{-n}_x(t,0),f^{-n}_y(t,0))$
\begin{itemize}
\item[(a)] Each coordinate of the pre-image $f^{-n}(t,0)$ is an increasing function of the parameter t in each connected component of  $\R\setminus \left(\cup_{j=0}^{n-1} f^{-j}(\{y=-x\}\cap \{y=0\})\right)$;

\item[(b)] $f^{-(n-1)}(t,0)\cap f^{-n}(t,0) = \emptyset$.
\end{itemize}
\end{lemma}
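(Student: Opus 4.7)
The plan is to prove both parts by induction on $n$, driven by the explicit inverse formula $f^{-1}(X,Y) = (X - 1/(X+Y),\, X+Y)$, obtained by solving $X=x+1/y,\ Y=y-1/y-x$ for $(x,y)$. Writing $f^{-n}(t,0) = (a_n(t), b_n(t))$, this yields the clean recursion
\begin{equation*}
(a_{n+1}(t),\, b_{n+1}(t)) = \left(a_n(t) - \frac{1}{a_n(t)+b_n(t)},\ a_n(t)+b_n(t)\right),
\end{equation*}
which is the main engine. The base case $n=1$ gives $f^{-1}(t,0) = (t-1/t,\, t)$, whose coordinates are strictly increasing on each component of $\R\setminus\{0\}$; the case $n=2$ is already executed in the excerpt.

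For the inductive step of (a), I would differentiate the recursion to obtain
\begin{align*}
a_{n+1}'(t) &= a_n'(t) + \frac{a_n'(t)+b_n'(t)}{\bigl(a_n(t)+b_n(t)\bigr)^{2}}, \\
b_{n+1}'(t) &= a_n'(t)+b_n'(t).
\end{align*}
By the induction hypothesis $a_n', b_n' > 0$ on each connected component of the allowed domain, so both derivatives are strictly positive wherever the recursion is defined. The recursion breaks down precisely where $a_n(t)+b_n(t)=0$, i.e.\ where the curve $f^{-n}(t,0)$ meets the singular line $\{y=-x\}$ of $f^{-1}$; these are exactly the new parameters to be adjoined to the removed set at level $n+1$, matching the union in the statement.

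For part (b), I would argue by direct contradiction. Suppose there exist admissible $t_1, t_2$ with $f^{-(n-1)}(t_1,0) = f^{-n}(t_2,0)$. Applying $f^{n-1}$ along the common orbit (well defined because the intersection point lies on both smooth curves) and using $f^{n-1}\circ f^{-n} = f^{-1}$, one is reduced to $(t_1, 0) = f^{-1}(t_2, 0) = (t_2 - 1/t_2,\, t_2)$. The second coordinate forces $t_2 = 0$, which contradicts the fact that $f^{-1}$ is undefined on $\{y=0\}$. This collapses the general case to the trivial observation that $\{y=0\}$ and $f^{-1}(\{y=0\})$ are already disjoint.

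The main obstacle I anticipate is not analytical but bookkeeping: confirming that the finite set of $t$ removed at step $n+1$ (the zeros of $a_n+b_n$), when combined with the previously removed parameters, assembles into exactly the union stated in the lemma, and that the positivity of derivatives transports correctly over each connected component. Once the interpretation of the removed set is pinned down, no new analytic idea is needed beyond the $n=2$ calculation already displayed; parts (a) and (b) both follow from the same inverse recursion plus the elementary fact that $f^{-1}(t,0)$ has nonzero $y$-coordinate whenever it is defined.
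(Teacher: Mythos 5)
Your proposal is correct and follows essentially the same route as the paper: part (a) is exactly the paper's induction on the derivatives of the inverse recursion $(a_{n+1},b_{n+1}) = \left(a_n - 1/(a_n+b_n),\; a_n+b_n\right)$, down to the same two displayed formulas. For part (b) the paper performs the same reduction one step at a time --- algebraically manipulating the recursion to show that an intersection of $f^{-n}$ with $f^{-(n+1)}$ forces one of $f^{-(n-1)}$ with $f^{-n}$, then inducting down to the explicit base computation --- whereas you apply $f^{n-1}$ in one shot to land directly on $(t_1,0)=f^{-1}(t_2,0)=(t_2-1/t_2,\,t_2)$; this is the paper's argument telescoped, and it is valid since admissibility of the parameters guarantees the forward iterates along the common backward orbit are defined.
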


\begin{proof}
Both proofs are made by induction. We already did the induction step before stating the lemma and the proofs resemble the previous cases.

\begin{itemize}
\item[(a)] Assume that it holds for $n$, that is, $(f^{-n}_x)'(t,0)$ and $(f^{-n}_y)'(t,0)$ are positive. Then
$$(f^{-(n+1)}_y)'(t,0)= (f^{-n}_x(t,0)+f^{-n}_y(t,0))' = (f^{-n}_x)'(t,0)+(f^{-n}_y)'(t,0) > 0$$
and
$$(f^{-(n+1)}_x)'(t,0)= (f^{-n}_x)'(t,0) + \dfrac{((f^{-n}_x)'(t,0)+(f^{-n}_y)'(t,0))}{(f^{-n}_x(t,0)+f^{-n}_y(t,0))^2} > 0$$

\item[(b)] If there is $t_1$ and $t_2$ such that
$$ \left\{\begin{array}{rcl}
f^{-n}_x(t_1,0) &=& f^{-(n+1)}_x(t_2,0)= f^{-n}_x(t_2,0) - \dfrac{1}{f^{-n}_x(t_2,0)+f^{-n}_y(t_2,0)} \\
f^{-n}_y(t_1,0) &=& f^{-(n+1)}_y(t_2,0)=f^{-n}_x(t_2,0)+f^{-n}_y(t_2,0)
\end{array}\right.$$
replacing
$$f^{-n}_x(t_2,0) = f^{-n}_y(t_1,0) - f^{-n}_y(t_2,0)$$
from the second equation in the first one, we get
\begin{eqnarray*}
f^{-n}_x(t_2,0)&=& f^{-n}_x(t_1,0) + \dfrac{1}{f^{-n}_y(t_1,0)} \\
&=& f^{-(n-1)}_x(t_1,0) - \dfrac{1}{f^{-n}_y(t_1,0)} + \dfrac{1}{f^{-n}_y(t_1,0)} = f^{-(n-1)}_x(t_1,0)
\end{eqnarray*}

Back to the second equation
$$f^{-(n-1)}_x(t_1,0)+f^{-(n-1)}_y(t_1,0) =: f^{-n}_y(t_1,0)= f^{-(n-1)}_x(t_1,0) + f^{-n}_y(t_2,0)$$
and putting the information together follows that exists $t_1$ and $t_2$ such that
$$ \left\{\begin{array}{rcl}
f^{-(n-1)}_x(t_1,0) &=& f^{-n}_x(t_2,0) \\
f^{-(n-1)}_y(t_1,0) &=& f^{-n}_y(t_2,0)
\end{array}\right.$$
that is $f^{-(n-1)}(t,0)\cap f^{-n}(t,0) \neq \emptyset$, a contradiction.
\end{itemize}
\end{proof}

\begin{rem}\label{Remark2.1}
Using the last lemma, we get 
{\footnotesize
$$f^{-n}(t,0) = \left(f^{-(n-1)}_x(t,0)- \dfrac{1}{f^{-(n-1)}_x(t,0)+f^{-(n-1)}_y(t,0)}, f^{-(n-1)}_x(t,0)+f^{-(n-1)}_y(t,0)\right)$$}
it is easy to see that
\begin{itemize}
\item[(i)] $\lim_{t\rightarrow \infty} f^{-n}_x(t,0)= \infty$;

\item[(ii)] $\lim_{t\rightarrow \infty} f^{-n}_y(t,0)= \infty$.
\end{itemize}
\end{rem}

In the next step, $f^{-2}(\{y=0\})$ will have 4 curves, because of the discontinuities of $f^{-1}$, that is, the pre-images below
\begin{itemize}
\item[(i)] $f^{-1}\left(\left\{\left(t-\dfrac{1}{t}, t\right); t>0 \mbox{ and } t>-\left(t-\dfrac{1}{t}\right)\right\}\right)$

\item[(ii)] $f^{-1}\left(\left\{\left(t-\dfrac{1}{t}, t\right); t>0 \mbox{ and } t<-\left(t-\dfrac{1}{t}\right)\right\}\right)$
\end{itemize}
and the other two cases follow from the previous two because we have that $-f^{-1}(x,y)=f^{-1}(-x,-y)$.

In (i), we get that
$$f^{-2}(\{y=0\}) = \left\{\left(t-\dfrac{1}{t}- \dfrac{1}{2t-\frac{1}{t}}, 2t - \dfrac{1}{t}\right); t>0 \mbox{ and } t>-\left(t-\dfrac{1}{t}\right)\right\}$$

Let $t_1>0$ and $t_2\in\{t\in\R^\ast; t>0 \mbox{ and } f^{-1}_y(t,0)>-f^{-1}_x(t,0)\}$ such that $f^{-1}_x(t_1,0)=0=f^{-2}_x(t_2,0)$ then
$$f^{-2}(t_1,0) = \left(f^{-1}_x(t_1,0)- \dfrac{1}{f^{-1}_x(t_1,0)+f^{-1}_y(t_1,0)}, f^{-1}_x(t_1,0)+f^{-1}_y(t_1,0)\right)$$
and recalling the choice of $t_1$ we have that
$$f^{-2}(t_1,0) = \left(-\dfrac{1}{f^{-1}_y(t_1,0)}, f^{-1}_y(t_1,0)\right)$$

Once $f^{-1}_y(t_1,0)>0=f^{-1}_x(t_1,0)$ and $t_1$ and $t_2$ are in the same connected component due the choice of $t_2$, we conclude from the previous lemma that $t_1<t_2$ and also that
$$f^{-1}_y(t_1,0) < f^{-1}_y(t_2,0)+f^{-1}_x(t_2,0)=f^{-2}_y(t_2,0)$$
because $f^{-2}_y(t_2,0)>0$. This means, using $(b)$ from the previous lemma, that the pre-image of the curve above the discontinuity of the inverse is another curve that is above $f^{-1}(\{y=0\})$ with $f^{-1}_y(t,0)>-f^{-1}_x(t,0)$.

For (ii) we will prove something similar but now the curve is below 0 and above $f^{-1}(\{y=0\})$ with $t<0$. Indeed, first note that for every $t\in\{t\in\R^\ast; t>0 \mbox{ and } f^{-1}_y(t,0)<-f^{-1}_x(t,0)\}$
$$f^{-2}_y(t,0) = f^{-1}_y(t,0)+f^{-1}_x(t,0)<0$$

Here we need to see that this is also above the first pre-image. The easiest way to check directly, take two parameters that have the same $x$ coordinate and see where the curves are. The induction step is very different from the general proof we will see a bit later. 

Let us find $t_0$ one of the parameters of $(ii)$ such that $f^{-2}_x(t_0,0)=0$ and examine what happens in the $y$ coordinate. All we have to do is to find the right solution to
$$t-\dfrac{1}{t} - \dfrac{1}{2t-\dfrac{1}{t}} = 0$$
or equivalently
$$2t^4 - 4t^2+1 =0$$
where we get that $t_0 = \sqrt{1 - \dfrac{\sqrt{2}}{2}}$. Hence
$$0>f^{-2}_y(t_0,0) = 2t_0 - \dfrac{1}{t_0}= \sqrt{4-\sqrt{2}}- \sqrt{2 +\sqrt{2}}>-1$$
that is, this curve is between $\{y=0\}$ and its negative pre-image.



Using the same ideas we introduced before, we will prove that the $f^{-(n+1)}(\{y=0; f^{-j}_y(t,0)>-f^{-j}_x(t,0), \ 0\leq j \leq n\})$ is a family of curves that ``covers'' the upper part of $\R^2$ in the sense that, for each point in this region, there is a curve of the mentioned family above and bellow it. The previous idea will be the induction step and the strategy is the same. Indeed, let us proceed in the same way, let $t_{n+1} \in \{t\in\R; f^{-j}_y(t,0)>-f^{-j}_x(t,0), \ 0\leq j \leq n\}$ such that
$$f^{-(n+1)}_x(t_{n+1},0) = 0$$

We want to see that each curve containing $f^{-n}(t_n,0)$ is an increasing sequence of curves, to do that we use the induction step. Let us assume that it holds for $n$, that is
$$f^{-n}_y(t_n,0) > f^{-(n-1)}_y(t_{n-1},0)>0=f^{-(n-1)}_x(t_{n-1},0)$$
and then $t_n\in \{t\in\R; f^{-j}_y(t,0)>-f^{-j}_x(t,0), \ 0\leq j \leq n\}$. Therefore we can compare $t_n$ and $t_{n+1}$ because they are in the same connected component of $f^{n+1}$. Hence
$$f^{-(n+1)}(t_n,0) = f^{-1}(f^{-n}(t_n,0))=\left(- \dfrac{1}{f^{-n}_y(t_n,0)}, f^{-n}_y(t_n,0)\right)$$ 
and once
$$f^{-(n+1)}_x(t_n,0) = -\dfrac{1}{f^{-n}_y(t_n,0)}< 0 = f^{-(n+1)}_x(t_{n+1},0)$$
we get, by lemma \ref{Lemma2.3}, that $t_n<t_{n+1}$ which implies 
$$f^{-(n+1)}_y(t_{n+1},0) > f^{-n}_y(t_{n},0)$$

The conclusion here is the same as described previously in (i), that is to say the $(n+1)$-th curve is above the $n$-th curve and also that $(f^{-n}_y(t_{n},0))_{n\in\N}$ is increasing. 



\begin{rem}
Notice that the area between two subsequent pre-images of $\{y=0\}$ and in the same side of the anti-diagonal is mapped inside the pre-images of those said curves.

\end{rem}

But we cannot conclude yet what we stated before because we do not know if the curves ``diverge'', we need to understand how the sequence we found behaves:

\begin{lemma}\label{LemmaL}
The sequence $(f^{-n}_y(t_{n},0))_{n\in\N}$ diverges.
\end{lemma}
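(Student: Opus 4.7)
The plan is to exploit the commutation $f\circ f^{-n}=f^{-(n-1)}$ to obtain an explicit lower bound on the increments $y_n-y_{n-1}$, where $y_n:=f^{-n}_y(t_n,0)$. By definition of $t_n$ we have $f^{-n}(t_n,0)=(0,y_n)$, and a direct computation from the formula for $f$ yields
\[
f(0,y_n)=\left(\frac{1}{y_n},\,y_n-\frac{1}{y_n}\right)=f^{-(n-1)}(t_n,0).
\]
Hence the two points $(0,y_{n-1})=f^{-(n-1)}(t_{n-1},0)$ and $\bigl(1/y_n,\,y_n-1/y_n\bigr)=f^{-(n-1)}(t_n,0)$ both lie on the curve $f^{-(n-1)}(\{y=0\})$, realised by the parameters $t_{n-1}$ and $t_n$ respectively.

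Next I would invoke Lemma~\ref{Lemma2.3}(a) at level $n-1$. The inductive construction carried out in the paragraphs immediately preceding the lemma's statement places all the relevant parameters $t_k$ for $k\le n$ in a single connected component of the domain of $f^{-(n-1)}$, so on this component both coordinates of $t\mapsto f^{-(n-1)}(t,0)$ are strictly increasing. Comparing $x$-coordinates, the inequality $1/y_n>0$ forces $t_{n-1}<t_n$; then comparing $y$-coordinates yields the key estimate
\[
y_n-\frac{1}{y_n}>y_{n-1},\qquad\text{that is,}\qquad y_n-y_{n-1}>\frac{1}{y_n}.
\]

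To conclude, I would argue by contradiction: the sequence $(y_n)$ has already been shown to be strictly increasing, so if it were bounded it would converge to some finite $L>0$, whence $y_n-y_{n-1}\to 0$; but the inequality above would then force $0=\lim(y_n-y_{n-1})\ge\liminf 1/y_n=1/L>0$, absurd. Therefore $y_n\to\infty$. The step I would verify most carefully is the common-connected-component claim used to apply monotonicity, but this is a direct consequence of the nested structure of the $t$-domains of $f^{-k}$, $k\le n-1$, already exploited in the preceding construction; once that is in place the rest of the argument is essentially a single-line telescoping estimate.
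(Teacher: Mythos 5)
Your argument is correct, and it organizes the proof along a genuinely different (and more quantitative) line than the paper's. Both proofs rest on the same two ingredients, namely the identity $f(0,y)=\left(\tfrac{1}{y},\,y-\tfrac{1}{y}\right)$ for a point on the $y$-axis and the monotonicity of both coordinates of $t\mapsto f^{-m}(t,0)$ on a common connected component (Lemma~\ref{Lemma2.3}(a)). You apply them at each finite stage, comparing $f^{-(n-1)}(t_n,0)=(1/y_n,\,y_n-1/y_n)$ with $f^{-(n-1)}(t_{n-1},0)=(0,y_{n-1})$ to extract the increment bound $y_n-y_{n-1}>1/y_n$, after which divergence is a one-line limit (or telescoping: $y_n\geq 1+(n-1)/L$ if $y_k\leq L$ for all $k$, which is even more explicit). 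The paper instead works at the hypothetical limit $L=\sup_n y_n$: it shows $f(0,L)=(1/L,\,L-1/L)$ lies below every curve $f^{-n}(\{y=0\})$ for $n\geq n_0$, then pulls back by $f^{-1}$, using the remark that the region between consecutive pre-images is mapped into the region between their pre-images, to conclude that $(0,L)$ lies below the $(n_0+1)$-st curve while $L\geq y_{n_0+1}$ forces it to lie above --- a trapping-region contradiction. Your version avoids the trapping region and the pullback step and yields strictly more information about the growth of $y_n$. The connected-component hypothesis you flag is indeed the only point needing care, and it is already supplied by the construction preceding the lemma: $t_{n-1}$ and $t_n$ are both chosen in the nested sets $\{t:\, f^{-j}_y(t,0)>-f^{-j}_x(t,0),\ 0\leq j\leq k\}$, and the components of the domain of $f^{-(n-1)}$ are coarser than those of $f^{-n}$, so no new verification is required.
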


\begin{proof}
Suppose that there exists
$$\lim_n f^{-n}_y(t_{n},0) = \sup_n f^{-n}_y(t_{n},0) = L>0$$
and because $t_1=1$ we know that $L>1$. Observe now that
$$f(0,L) = \left(\dfrac{1}{L}, L-\dfrac{1}{L}\right)$$
with $L-\dfrac{1}{L}>0$. Also, there exists $n_0\in\N$ such that
$$f^{-n}_y(t_n,0)\geq L-\dfrac{1}{L} \qquad \forall n\geq n_0$$

For all $n\geq n_0$, let $t^L_n$ be the parameter in which
$$f^{-n}_x(t^L_n,0) = \dfrac{1}{L}>0$$
in the same connected component of $t_n$. Thus
$$f^{-n}_y(t^L_n,0) > f^{-n}_y(t_n,0) \geq L - \dfrac{1}{L}$$

This tells us that $f(0,L)$ is in the first quadrant, also it cannot be one of the discontinuities of $f^{-n}$ and it is bellow the curves containing $f^{-n}(t_n,0)$ for all $n\geq n_0$.


Now we know that $f(0,L)$ is in the area delimited by the coordinates axis and bounded above by the the curve containing $f^{-n_0}(t_{n_0},0)$. Recalling the previous remark, we see that $(0,L)= f^{-1}\left(\dfrac{1}{L}, L - \dfrac{1}{L}\right)$ is in the region limited above by $f^{-1}(f^{-n_0}(t_{n_0},0))$ although it is also above it, a contradiction.
\end{proof}

Now we want to understand of the generalization of $(ii)$, that is, the points that in the $n$-th pre-image changes sign with respect the anti-diagonal:
$$D_n:=\{t\in\R; f^{-j}_y(t,0)>-f^{-j}_x(t,0), \ 0\leq j < n \mbox{ and the opposite for }n\}$$

Here the induction hypothesis is that for $n$ is under the curve $\{y=0\}$ and above the pre-image of $D_{n-1}$. Our objective here is to see that the $(n+1)$-th pre-image is trapped between $f^{-1}(D_n)$ and $\{y=0\}$. The idea here is purely geometric: consider a straight line $l$ that comes from the $y$-axis and that touches $D_{n+1}$ as illustrated below. All it is left is to see what happens to the pre-image of this line.


When we look at the pre-image of this line and use the induction hypothesis, also remember that that $D_j$ are sets that are moving up according to the previous case, we observe that the pre-image of $l$ touches each one of the curves only once. This implies that $f^{-1}(D_{n+1})$ is either between $D_n$ and $D_1$, under $D_1$ or it is above $D_n$ as we wanted. 

If the first one occurs, then there would be a line joining $f^{-1}(D_{n+1})$ and $f^{-1}(D_1)$ that does not touch $f^{-1}(D_{n})$ although its would intersect $D_n$ which is a contradiction. 

If the second one holds, something similar to the previous case would happen: the line connecting $f^{-1}(D_{n+1})$ and $f^{-1}(D_1)$ would go trough $f^{-1}(D_1)$ twice, implying that the image of the curves crosses $D_1$ also twice, which is absurd.  Therefore, the third case holds as we wanted.

\begin{coro}\label{insidecurves}
The curves given by $f^{-1}(D_n)$ converges pointwise to 0.
\end{coro}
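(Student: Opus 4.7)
I propose to mimic the proof of Lemma \ref{LemmaL}. The discussion preceding the corollary shows that the curves $\{f^{-n}(t,0):t\in D_n\}$ sit between $\{y=0\}$ above and the previous member of the family below, so at the abscissa $x=0$ the corresponding $y$-coordinates form a monotone increasing sequence bounded above by $0$. It therefore suffices to rule out the possibility that their common limit is some $L<0$.

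\textbf{Setup and contradiction.} For each $n$ let $\tau_n\in D_n$ be the unique parameter (in the connected component provided by Lemma \ref{Lemma2.3}(a)) for which $f^{-n}_x(\tau_n,0)=0$, and set $\ell_n:=f^{-n}_y(\tau_n,0)<0$. Assume for contradiction that $\ell_n\uparrow L$ with $L<0$; after dropping the first few terms we may also assume $L\in(-1,0)$. Compute $f(0,L)=\bigl(1/L,\;L-1/L\bigr)$, a point with $1/L<0$ and $L-1/L>0$. Using Lemma \ref{Lemma2.3}(a) together with $\ell_n\to L$, one checks that for all $n\ge n_0$ the point $f(0,L)$ lies in the strip bounded by $\{y=0\}$ above and by the $n_0$-th member of the family below, in the connected component containing the $\tau_n$. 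By the Remark preceding Lemma \ref{LemmaL}, the strip between two consecutive members of this family is mapped by $f^{-1}$ into the analogous strip between their pre-images; hence applying $f^{-1}$ to $f(0,L)=(0,L)$ places $(0,L)$ inside a strip strictly below the curve through $(0,\ell_{n_0+1})$. This contradicts the fact that $(0,L)$ lies above every $(0,\ell_n)$ by the choice of $L$.

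\textbf{Main obstacle.} Unlike Lemma \ref{LemmaL}, where the relevant region is a clean portion of the first quadrant, the curves here inhabit a region crossed by the anti-diagonal $\{y=-x\}$. One must therefore pick carefully the correct connected component of $\mathbb{R}^2\setminus(\{y=0\}\cup\{y=-x\})$ in which $f(0,L)$ lies, and verify that the branch of $f^{-1}$ being applied is the one matching the sign convention built into the definition of $D_n$. Once this bookkeeping is settled, the contradiction proceeds exactly in parallel with the proof of Lemma \ref{LemmaL}.
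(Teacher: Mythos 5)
Your overall instinct---transport the hypothetical limit by the dynamics and play it off against Lemma \ref{LemmaL}---is the right one, and it is in fact what the paper does, but your execution breaks down at the decisive step. With $L\in(-1,0)$ you correctly compute $f(0,L)=\left(1/L,\,L-1/L\right)$ and note that $L-1/L>0$; this point therefore lies \emph{above} the line $\{y=0\}$ (in the second quadrant, and below the anti-diagonal since $(L-1/L)+1/L=L<0$). It consequently cannot ``lie in the strip bounded by $\{y=0\}$ above and by the $n_0$-th member of the family below'': the curves $f^{-1}(D_n)$ live strictly below $\{y=0\}$, so the containment you assert is impossible and the subsequent pull-back by $f^{-1}$ has nothing to act on. The bookkeeping you flag as ``the main obstacle'' and then declare settled is exactly the step that fails; there is also an indexing slip, since $\ell_n:=f^{-n}_y(\tau_n,0)$ with $\tau_n\in D_n$ parametrizes the curve carrying $D_n$ rather than the curve $f^{-1}(D_n)$ that the corollary is about.

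The paper's proof is one line and avoids all of this: if the curves $f^{-1}(D_n)$ accumulated at a point $p$ with $y$-coordinate $L\neq 0$, then, $f$ being continuous off $\{y=0\}$, the images of the approximating points would accumulate at $f(p)$; but those images lie on the curves carrying $D_n$, which were already shown (Lemma \ref{LemmaL} together with the mirroring $f(-x,-y)=-f(x,y)$) to diverge, hence to admit no finite accumulation point. So the correct comparison is between $f(0,L)$ and the \emph{forward} images of your family, using their divergence as a black box---not a sandwiching of $f(0,L)$ between members of the original family followed by an application of $f^{-1}$. To salvage your version you would need to (i) identify the family whose strips could contain $f(0,L)$ as the $D_n$-curves below the anti-diagonal, (ii) show those strips are ordered and exhaust that region (which is precisely where the divergence enters), and only then (iii) pull back; as written, none of these is in place.
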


\begin{proof}
To establish this just observe that, in case it does not, each limit would be a limiting point to the direct image of it, that is, a limit for the curves we already proved that diverge in Lemma \ref{LemmaL}.
\end{proof}

\begin{rem}
It is interesting to see that these points that are in the curves defined by $f^{-1}(D_n)$ will be mapped ``away'' from the $\{y=0\}$, more precisely, as a consequence of Lemma \ref{LemmaL} the curves that contain each $D_n$ covers the lower part of the plane. We will explore a bit more of this in the subsection \ref{Subshift}.
\end{rem}




\subsubsection{Images of $\{y=-x\}$}

We want to make the same study for the images of the inverse map's discontinuity
$$f(\{x=-y\}) = \left\{f(t,-t); t\in\R \right\} = \left\{ \left(t-\dfrac{1}{t}, -2t+\dfrac{1}{t}\right); t\in \R\right\}$$


Observe that, using the previous notation, one can write
$$f(t,-t) = (f^{-1}_x(t,0), - f^{-2}_y(t,0))$$
and that is in fact a little more general as describe in the next remark.

\begin{rem}\label{Remark2.3}
We have the following identification
$$f^n(t,-t)=(f^{-n}_x(t,0), -f^{-(n+1)}_y(t,0))$$
\end{rem}

\begin{proof}
All we need to do is to check it by induction. Assume it holds for $n$, then
$$f^{n+1}(t,-t) = f(f^n(t,-t)) = f(f^{-n}_x(t,0), - f^{-(n+1)}_y(t,0))$$
which implies that
{\footnotesize
$$f^{n+1}(t,-t) = \left(f^{-n}_x(t,0) - \dfrac{1}{f^{-(n+1)}_y(t,0)}, - f^{-(n+1)}_y(t,0) + \dfrac{1}{f^{-(n+1)}_y(t,0)} - f^{-n}_x(t,0) \right)$$}

Recalling the definitions of $f^{-n}_x(t,0)$ and $f^{-n}_y(t,0)$,
$$f^{-(n+1)}_x(t,0)= f^{-n}_x(t,0)- \dfrac{1}{f^{-n}_x(t,0)+f^{-n}_y(t,0)} =  f^{-n}_x(t,0)- \dfrac{1}{f^{-(n+1)}_y(t,0)}$$
and it is straightforward to see that
$$f^{n+1}(t,-t) = \ \left(f^{-(n+1)}_x(t,0), - f^{-(n+2)}_y(t,0) \right)$$
\end{proof}

To understand the behaviour of $f^n(t,-t)$ we need to make a refinement of the remark \ref{Remark2.1}. We started this analysis looking only at the ``infinity'', but we need to take it a bit further.

\begin{rem}
Recalling that the pre-images of $\{y=0\}$ are given by
{\footnotesize
$$f^{-n}(t,0) = \left(f^{-(n-1)}_x(t,0)- \dfrac{1}{f^{-(n-1)}_x(t,0)+f^{-(n-1)}_y(t,0)}, f^{-(n-1)}_x(t,0)+f^{-(n-1)}_y(t,0)\right)$$}
we can understand the full behaviour of $f^{-n}(t,0)$ near the discontinuities as described bellow
\begin{itemize}
\item[(i)] $\lim_{t\rightarrow t_d^-} f^{-n}_x(t,0)= \infty$, where $t_d\in \left(\cup_{j=0}^{n-1} f^{-j}(\{y=-x\}\cap \{y=0\})\right)\cup \{\infty\}$;

\item[(ii)] $\lim_{t\rightarrow t_d^+} f^{-n}_x(t,0)= -\infty$, where $t_d\in \left(\cup_{j=0}^{n-1} f^{-j}(\{y=-x\})\cap \{y=0\}\right)\cup \{-\infty\}$; 

\item[(iii)] $\lim_{t\rightarrow t_d^-} f^{-n}_y(t,0)= \infty$, where $t_d\in f^{-(n-1)}(\{y=-x\}\cap \{y=0\})\cup \{\infty\}$;

\item[(iv)] $\lim_{t\rightarrow t_d^+} f^{-n}_y(t,0)= 0$, where $t_d\in f^{-(n-1)}(\{y=-x\}\cap \{y=0\})$;

\item[(v)] $\lim_{t\rightarrow t_d^-} f^{-n}_y(t,0)= 0$, where $t_d\in \left(\cup_{j=0}^{n-2} f^{-j}(\{y=-x\}\cap \{y=0\})\right)$;

\item[(vi)] $\lim_{t\rightarrow t_d^+} f^{-n}_y(t,0)= -\infty$, where $t_d\in \left(\cup_{j=0}^{n-2} f^{-j}(\{y=-x\}\cap \{y=0\})\right)\cup \{-\infty\}$;
\end{itemize}
\end{rem}

\begin{proof}
Using the recurrence formula we already know, one can deduce by induction that
$$f^{-n}(t,0) = \left(t - \sum_{j=1}^n\dfrac{1}{f^{-j}_y(t,0)}, f^{-(n-1)}_x(t,0)+f^{-(n-1)}_y(t,0)\right)$$

To get items $(i)$ and $(ii)$, all we need to do is understand what the first coordinate of the previous relation is telling us. Just notice that for each $t_d$ only one of the $f^{-(j+1)}_y(t_d,0)=f^{-j}_x(t_d,0)+f^{-j}_y(t_d,0)$ vanishes per time and the sign comes from which side it approaches 0.

The items $(iii)$ and $(iv)$ come directly from the formula and observing that the second coordinate vanishes. For the last two just replace $f^{-(n-1)}_x(x,0)$ by the induction formula we first stated here, that is,
$$f^{-n}_y(t,0)= f^{-(n-1)}_x(t,0)+f^{-(n-1)}_y(t,0) = f^{-(n-1)}_y(t,0) + x - \sum_{j=1}^n\dfrac{1}{f^{-j}_y(t,0)}$$
and repeat the analysis we did in $(i)$.
\end{proof}

It is an interesting observation that for $f^n(t,-t)= (f^{-n}_x(t,0)$,\linebreak $-f^{-(n+1)}_y(t,0))$, the discontinuities of $f^{-n}_x(t,0)$ and $-f^{-(n+1)}_y(t,0)$ are the same, that is, $(-f^{-(n+1)}_y(t,0))$ is an continuous function in each connected component of $\left(\cup_{j=0}^{n-1} f^{-j}(\{y=-x\}\cap \{y=0\})\right)$ that is onto $\R$.

Putting this all together, we can see that what is happening in this case is something very similar to the case of $f^{-1}(t,0)$. It is actually pretty much the same idea but the exceptional curve here that we have to avoid is $\{y=0\}$, that is, when the $n$-th image touchs this curve its $(n+1)$-th image will split into two different curves just like happened before.


The relation of the discontinuities tells us exactly which are the points that nullify $f^n_y(t,-t)$: the discontinuities of $f^{-(n+1)}_x(t,0)$. Hence it is expected to have something similar to lemma \ref{Lemma2.3} and the other results that follow from it.

\begin{lemma}
With the notation $f^{n}(t,-t)=(f^n_x(t,-t),f^n_y(t,-t))$
\begin{itemize}
\item[(a)] $f^{n}_x(t,-t)$ is increasing and $f^n_y(t,-t)$ is decreasing with respect the parameter t in each connected \linebreak component of  $\R\setminus \left(\cup_{j=0}^{n-1} f^{-j}(\{y=-x\}\cap \{y=0\})\right)$;

\item[(b)] $f^{(n-1)}(t,-t)\cap f^{n}(t,-t) = \emptyset$.
\end{itemize}
\end{lemma}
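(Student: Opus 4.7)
The strategy is to leverage Remark \ref{Remark2.3}, which reduces the study of the image curves $f^n(t,-t)$ to the already-understood pre-image curves $f^{-n}(t,0)$ via the identity
$$f^n(t,-t) = \left(f^{-n}_x(t,0), -f^{-(n+1)}_y(t,0)\right).$$
In this way, both parts of the lemma should collapse to the corresponding statements in Lemma \ref{Lemma2.3}, modulo bookkeeping about the domain of definition.

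For part (a), the plan is the following. The first coordinate $f^n_x(t,-t) = f^{-n}_x(t,0)$ is increasing in $t$ on each connected component of $\R \setminus \left(\cup_{j=0}^{n-1} f^{-j}(\{y=-x\}\cap\{y=0\})\right)$ directly by Lemma \ref{Lemma2.3}(a). The second coordinate $f^n_y(t,-t) = -f^{-(n+1)}_y(t,0)$ is, by the same Lemma \ref{Lemma2.3}(a) applied at level $n+1$, a decreasing function of $t$ on the connected components of $\R \setminus \left(\cup_{j=0}^{n} f^{-j}(\{y=-x\}\cap\{y=0\})\right)$. A priori these sets differ, but the remark immediately preceding the lemma asserts precisely that the discontinuities of $-f^{-(n+1)}_y(t,0)$ coincide with those of $f^{-n}_x(t,0)$, so the extra putative discontinuities at level $n$ are not actual ones, and monotonicity holds on the stated domain.

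For part (b), I would argue by contradiction and reduce to Lemma \ref{Lemma2.3}(b). Suppose $t_1,t_2$ satisfy $f^{n-1}(t_1,-t_1)=f^n(t_2,-t_2)$. Unpacking through Remark \ref{Remark2.3}, this reads
$$f^{-(n-1)}_x(t_1,0)=f^{-n}_x(t_2,0) \qquad \text{and} \qquad f^{-n}_y(t_1,0)=f^{-(n+1)}_y(t_2,0).$$
Now I would substitute the recurrence $f^{-(n+1)}_y(t_2,0) = f^{-n}_x(t_2,0)+f^{-n}_y(t_2,0)$ into the second equation, use the first equation to replace $f^{-n}_x(t_2,0)$ by $f^{-(n-1)}_x(t_1,0)$, and then invoke the corresponding recurrence $f^{-n}_y(t_1,0)=f^{-(n-1)}_x(t_1,0)+f^{-(n-1)}_y(t_1,0)$ on the left. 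The common term $f^{-(n-1)}_x(t_1,0)$ cancels, leaving $f^{-(n-1)}_y(t_1,0)=f^{-n}_y(t_2,0)$. Combined with the first identity, this produces $f^{-(n-1)}(t_1,0) = f^{-n}(t_2,0)$, contradicting Lemma \ref{Lemma2.3}(b).

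The main subtlety I anticipate is the domain matching in part (a): one must confirm that the discontinuity set of the second coordinate is really indexed by $j=0,\dots,n-1$ rather than $j=0,\dots,n$, which is exactly the content of the observation recorded just before the lemma. Everything else reduces to direct substitution and an appeal to Lemma \ref{Lemma2.3}.
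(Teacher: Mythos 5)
Your proposal is correct and follows essentially the same route as the paper: both reduce the lemma to Lemma \ref{Lemma2.3} via the identification $f^n(t,-t)=(f^{-n}_x(t,0),-f^{-(n+1)}_y(t,0))$ of Remark \ref{Remark2.3}, and your part (b) carries out exactly the substitution-and-cancellation that yields $f^{-(n-1)}_y(t_1,0)=f^{-n}_y(t_2,0)$ and hence a contradiction with Lemma \ref{Lemma2.3}(b). Your explicit attention to the domain-matching issue in part (a) is slightly more careful than the paper's one-line dismissal, but it invokes the same preceding observation about the coincidence of discontinuity sets.
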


\begin{proof}
The proof becomes very easy when we use the remark \ref{Remark2.3}. The first item follows directly from \ref{Lemma2.3}. The proof of the second one follows the same idea, all we have to see is that we can reduce this to the case (b) of the original lemma.

Suppose there exists $t_1$ and $t_2$ such that
$$ \left\{\begin{array}{rcl}
f^{(n-1)}_x(t_1,-t_1) &=& f^{n}_x(t_2,-t_2) \\
f^{(n-1)}_y(t_1,-t_1) &=& f^{n}_y(t_2,-t_2)
\end{array}\right.$$
implying that
$$ \left\{\begin{array}{rcl}
f^{-(n-1)}_x(t_1,0) &=& f^{-n}_x(t_2,0) \\
f^{-n}_y(t_1,0) &=& f^{-(n+1)}_y(t_2,0)
\end{array}\right.$$

Just using the first equation in the second we get
$$f^{-(n-1)}_y(t_1,0) = f^{-n}_y(t_2,0)$$
in other words
$$f^{-(n-1)}(t,0)\cap f^{-n}(t,0) \neq \emptyset$$
which is a contradiction
\end{proof}

We have to check again the reorganizing pattern of the curves under the action of $f$. The good news is that, disconsidering the change of sign, it is like the previous case: the curves that are the $n$-th images of $\{t\in\R; f^{-j}_y(t,0)>-f^{-j}_x(t,0), \ 0\leq j \leq n\}$ covers the whole part under the anti-diagonal and the images of the other positive parameters go inside the area delimited by $f(\{y=-x\})$, just like happened in the other case. But that is just to look at what we already did and it will follow directly from the relation between that image and the pre-image.

Just obverse that we already know that, in this set 
$$f^{-n}_y(t,0) < f^{-(n+1)}_y(t,0) \qquad \forall n\in \N$$
or in other words
$$f^{(n-1)}_y(t,-t) > f^{n}_y(t,-t)$$
the curves given by these sets are moving down in each step. Using the same analysis one can see that $D_n$ is a curve that is between the curves given by $f(t,-t)$, just like what happens to the pre-images.

\vspace{.3cm}

If we could conclude the density of curves stated in \ref{Lemma1} it would imply that we actually have a conjugacy. As we already mentioned, we conjecture this to be true because we have strong evidence linked to the non-uniform hiperbolic behavior of the map.

\subsection{New Coordinates}

This construction just uses all the information we got so far: how the images and pre-images cover the whole plane. We will use the symmetries of the map to make easier the understanding of the proof. At this first instance we will only consider the upper part of the plane and, once $f(-x,-y)=-f(x,y)$, everything can be mirrored to the lower part of $\R^2$.

Let $\{R_i\}_{i\geq0}$ and $\{L_j\}_{j\geq0}$ be the families of curves given respectively by the lamination of the highest $i$-pre-image of $\{y=0\}$ and the highest $j$-image of $\{y=-x\}$ with respect to the anti-diagonal. Denote the mirrored curves by  $\{R_i\}_{i<0}$ and $\{L_j\}_{j<0}$

The key here is to use the only information we have: the boundaries of each intersection are curves that we know exactly how it moves. Denote by the pair $(i,j)$ the region delimited by $(R_i,R_{i-1}, L_j,L_{j-1})$. Observe now that once the curves that delimit each $(i,j)$ are related by the image and the pre-image of $f$, that is, $R_{i+1}$ is the pre-image of one part of $R_i$ and the same holds for $L_{j+1}$, because it is the image of a part of $f$. 

We need the additional information that the Corollary \ref{insidecurves} gives us: we have to add the curves inside each $(i,j)$. The curves determined in \ref{insidecurves} are curves inside $R_1$ that are induced by the pre-images of the $\{R_i\}_{i<0}$, that is, it is a family of curves $R_{1\oplus i_1}$ contained in $R_1$ such that
$$f(R_{1\oplus i_1})\subset R_{i_1} \qquad i_1\in\mathbb{Z}^-$$
where $R_{1\oplus i_1} = f^{-1}(R_{i_1})$. Using the analogous definition, we can define $\{L_{1\oplus j_1}\}_{j_1\in\mathbb{Z^-}}$.


However, this is not restricted to $R_1$ and $L_1$: it is a consequence of the choice of the pre-images and how they distribute above the plane that we can ``extend'' the curves inside of $R_1$ and $L_1$ to any $R_i$ and $L_j$, for $i,j>0$.

In order to do so, the study of how the pre-images of $f^{-n}(\{y=0\})$ distribute over the plane proved that
\begin{eqnarray*}
f(i,j) = \left\{ \begin{array}{rcl}
& (i-1, j+1)& \ i>1 \mbox{ and } j>0; \\
& (i-1, 1\oplus j)& \ i>1 \mbox{ and } j<0
\end{array}\right.
\end{eqnarray*}
which takes the the subdivisions of $R_1$ and $L_1$ to all the previously mentioned sets. Therefore we can define the sets $\{R_{i_0\oplus i_1}\}$ and $\{L_{j_0\oplus j_1}\}$, for integers of alternating signs and $R_{i_0\oplus 0}:= R_{i_0}$ and $L_{j_0\oplus 0}:= L_{j_0}$.

\begin{figure}[h!]\begin{center}\includegraphics[scale=.28]{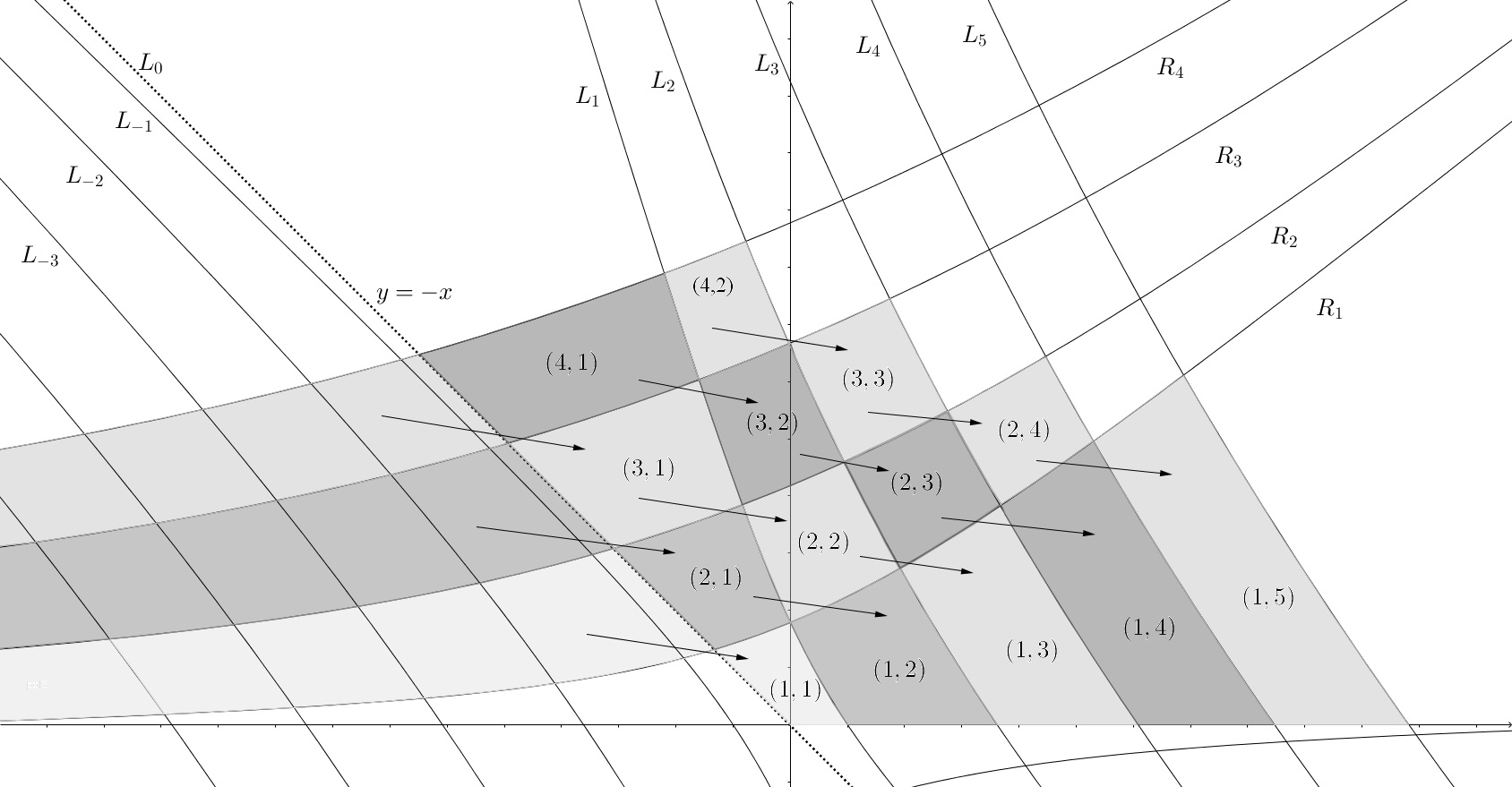}\caption{The dynamics of the part above $R_1$.}\end{center}\end{figure}

Just like above, the pair $(i_0 \oplus i_1,j_0\oplus j_1)$ will denote the region delimited by the curves $(R_{i_0 \oplus i_1},R_{i_0 \oplus i_1 -1 }, L_{j_0\oplus j_1},L_{j_0\oplus j_1-1})$.

Again, we know that $f(R_{1\oplus i_1})\subset R_{i_1}$. This shows us that we can also induce an lamination within the region between $R_{1\oplus i_1}$ and $R_{1\oplus i_1-1}$ that comes from what we defined in the previous step, i.e., the one we already have inside $R_{i_1}$, namely $\{R_{i_1\oplus i_2}\}_{i_2 \in \mathbb{Z^+}}$. With this in hand, we can define the curves $\{R_{1 \oplus i_1 \oplus i_2}\}$ and then extend it to the curves
$$\{R_{i_0\oplus i_1 \oplus i_2}; \mbox{ where } i_0, i_1, i_2\in \mathbb{Z} \mbox{ of alternating signs}\}$$
that lay inside the region delimited by $R_{i_0\oplus i_1}$ and $R_{i_0\oplus i_1 -1}$. Using the same argument we can define the curves inside each region delimited by $L_{j_0\oplus j_1}$ and $L_{j_0\oplus j_1-1}$: the set of curves $\{L_{j_0\oplus j_1 \oplus j_2}\}$.

Like before, it is possible to subdivide the region delimited by each $R_{1 \oplus i_1 \oplus i_2}$ and $R_{1 \oplus i_1 \oplus i_2-1}$ once we know that $f^2(R_{1\oplus i_1\oplus i_2})\subset R_{i_2}$, we can continue to subdivide each region we got in the previous step. Proceeding in the same way we stated before we got, by construction, two dense sets of curves that are transversal:
$$\{R_{i_0 \oplus_{n\in\mathbb{N}} i_n}; \mbox{ where } i_n\in \mathbb{Z} \mbox{ of alternating signs}\}$$
and the correspondent in the other direction
$$\{L_{j_0 \oplus_{m\in\mathbb{N}} j_m}; \mbox{ where } j_m\in \mathbb{Z} \mbox{ of alternating signs}\}$$

Due the density of the curves, if a point does not lie over any of these curves, it may be represented by the new coordinates gives regarding these curves: the intersection of all regions that we introduced, that is, we may identify each point by the coordinates $\left(i_0 \oplus_{n\in\mathbb{N}} i_n, j_0 \oplus_{m\in\mathbb{N}} j_m\right)$. If a point lies over a $R$ or a $L$ curve, then it means that it has a finite representation in that coordinate, e.g., if we have a point that lies over the $R_{i_0\oplus i_1\dots \oplus i_k}$, it will have a finite $i$-coordinate: $(i_0\oplus i_1\oplus\dots \oplus i_k,j_0 \oplus_{m\in\mathbb{N}} j_m)$. The density playing along the transversality give the unique representation of each point in the plane.

Although the coordinates look a bit terrifying, it is a very useful way to describe de dynamics because of the way we constructed them
{\tiny
\begin{eqnarray*}
f\left(i_0 \oplus_{n\in\mathbb{N}} i_n, j_0 \oplus_{m\in\mathbb{N}} j_m\right)=\left\{\begin{array}{rcl}
&\left(i_0 - 1 \oplus_{n\in\mathbb{N}} i_n, j_0 + 1\oplus_{m\in\mathbb{N}} j_m\right)& \ i_0>1 \mbox{ and } j_0>0; \\
& \left(i_1 \oplus_{n\geq 2} i_n, j_0 + 1\oplus_{m\in\mathbb{N}} j_m\right)& \ i_0=1 \mbox{ and } j_0>0; \\
&\left(i_0 - 1 \oplus_{n\in\mathbb{N}} i_n, 1\oplus j_0 \oplus_{m\in\mathbb{N}} j_m\right)& \ i_0>1 \mbox{ and } j_0<0; \\
& \left(i_1 \oplus_{n\geq 2} i_n, 1 \oplus j_0 \oplus_{m\in\mathbb{N}} j_m\right)& \ i_0=1 \mbox{ and } j_0<0;
\end{array}\right.
\end{eqnarray*}}
and using the mirroring property that the H\'enon-Devaney has, one can see what happens with the signs changed. With this in hand, we will proceed to give an complete symbolic description of the map.


\subsection{The Subshift}\label{Subshift}

In this section we will explain how to encrypt the H\'enon-Devaney map into a subshift of finite type. Let \linebreak $\mathcal{A}= \{-2,-1,0,1,2\}$ be the alphabet of symbols we will use but, however, it will not be a complete shift. We want to find a conjugacy between the original map and a product of two subshifts, one for each coordinate we introduced before.

At first we will consider points which have the complete description on terms of the $(i,j)$ coordinates. We will consider them first not only because they will give us the idea behind the coding but also because they form the set in $\mathbb{R}^2$ that the dynamics is defined for all iterations backwards and forwards. Also, in terms of the Lebesgue measure in the plane, the complement, that is, the point which have finite orbit backward or forward, have zero measure. This comes from the fact that these points lay all on the set given by
$$\left(\bigcup_{n\in\mathbb{N}}f^{-n}(\{y=0\})\right) \cup \left(\bigcup_{n\in\mathbb{N}}f^{n}(\{y=-x\})\right)$$
that has zero Lebesgue measure.

We will have a region of interest for each coordinate, and it is defined by when $\vert i_0 \vert=1$ for the $i$-coordinate and $\vert j_0 \vert=1$ for the $j$-coordinate. This particular region has to be highlighted because it is exactly where the dynamics change. The symbol that will be attributed to the point in each instant and for each coordinate is:
\begin{eqnarray*}
i_0 \oplus_{n\in\mathbb{N}} i_n \stackrel{s_i}{\mapsto} \left\{
\begin{array}{rcl}
&2& \ i_0>1 \\
&1& \ i_0=1 \\
&-1& \ i_0=-1 \\
&-2& \ i_0<-2
\end{array}\right.
\end{eqnarray*}
and the same for the $j$-coordinate. Given any point $p$ with full orbit defined, the sequence we will associate is linked to the itinerary of the point:
$$(\dots\underbrace{s_{i_{-2}}}_{f^{-2}(p)} \underbrace{s_{i_{-1}}}_{f^{-1}(p)}\textbf{;} \underbrace{s_{i_{0}}}_{p} \underbrace{s_{i_1}}_{f(p)}\underbrace{s_{i_2}}_{f^2(p)}\dots \ ,\ \dots\underbrace{s_{j_{-2}}}_{f^{-2}(p)} \underbrace{s_{j_{-1}}}_{f^{-1}(p)}\textbf{;} \underbrace{s_{j_{0}}}_{p} \underbrace{s_{j_1}}_{f(p)}\underbrace{s_{j_2}}_{f^2(p)}\dots) $$
where $s_{i_n}$ represents the symbol that has to be associated to the $i$-coordinate at the instant $f^{i_n}(p)$ and $s_{j_m}$ represents the symbol that has to be associated to the $j$-coordinate at the instant $f^{j_m}(p)$.

\begin{example}
Let us take a moment to understand how the coding will take place with some examples.
{\footnotesize
\begin{table}[h]
\centering
\vspace{0.5cm}
\begin{tabular}{c|c|c|c|c}
 
& Initial Point $p$ & $f(p)$ & $f^2(p)$ \\ 
\hline \hline    \hline                         
coordinates & $(3\oplus -2, 1\oplus -4)$  & $(2\oplus -2, 2\oplus -4)$ & $(1\oplus -2, 3\oplus -4)$   \\ \hline
coding &  $(2 \ ,\ 1)$  &  $(22 \ , \ 12)$    &  $(221 \ , \ 122)$   \\ \hline\hline\hline

coordinates &  $(-1\oplus 3\oplus -1, -1)$   & $( 3\oplus -1, -2)$   &  $( 2\oplus -1, 1\oplus -2)$   \\ \hline
coding & $(-1 \ , \ -1)$  &  $(-12 \ , \ -1-2)$  &  $(\operatorname{-122} \ , \ \operatorname{-1-21})$  \\ \hline\hline\hline 

coordinates &  $(-1\oplus 1\oplus -1, -1)$   & $( 1\oplus -1, -2)$   &  $(  -1, 1\oplus -2)$   \\ \hline
coding & $(-1 \ , \ -1)$  &  $(\operatorname{-11} \ , \ \operatorname{-1-2})$  &  $(\operatorname{-11-1} \ , \ \operatorname{-1-21})$  \\ \hline\hline\hline 

coordinates &  $(3\oplus -2, 3\oplus -4)$  & $(2\oplus -2, 4\oplus -4)$ & $(1\oplus -2, 5\oplus -4)$   \\ \hline
coding & $(2 \ ,\ 2)$  &  $(22 \ , \ 22)$    &  $(221 \ , \ 222)$  \\ \hline\hline\hline 

coordinates &  $(3\oplus -2, -1\oplus 4)$  & $(2\oplus -2, 1\oplus-1\oplus -4)$ & $(1\oplus -2, 2\oplus-1\oplus -4)$   \\ \hline
coding & $(2 \ ,\ \operatorname{-1})$  &  $(22 \ , \ \operatorname{-11})$    &  $(221 \ , \ \operatorname{-112})$  \\ \hline\hline\hline 
\end{tabular}
\end{table}}

To completely understand how the orbits behave under the iteration of $f$, keep in mind the description we introduced before using the coordinates. It makes easier to see how $f$ acts in the coordinates and just compute which $R$ and $L$-stripe you are.
\end{example}

We will deal with each one of the coordinates separately, first defining the coding in the $i$-coordinate and then proving some lemmas about it. The $j$-coordinate will be dealt latter on but the the idea is pretty much the same. Even tough they ``see'' different things, they have an intrinsic relation that will become very clear once we clarify the coding.

\subsubsection{Coding the $i$-coordinate}

To code the $i$-coordinate, let us define for $p=(i_0 \oplus_{n\in\mathbb{N}} i_n, j_0 \oplus_{m\in\mathbb{N}} j_m)$, the definition of $h_i(p)$ is split depending on the sign of $i_0$ and $j_0$:
{\footnotesize
$$\dots \underbrace{\pm2 \dots \pm2 \pm1}_{\vert j_2 \vert}\underbrace{\mp2 \dots \mp2 \mp1}_{\vert j_1 \vert}\underbrace{\pm2 \dots \pm2}_{\vert j_0 \vert}\textbf{;}\underbrace{\pm2 \dots \pm2}_{\vert i_0 \vert-1} \underbrace{\pm1 \mp2 \dots \mp2}_{\vert i_1 \vert}\underbrace{\mp1 \pm2 \dots \pm2}_{\vert i_2 \vert}\dots$$}
if $\operatorname{sign}(i_0)=\operatorname{sign}(j_0)$ and
{\footnotesize
$$\dots \underbrace{\pm2 \dots \pm2 \pm1}_{\vert j_2 \vert}\underbrace{\mp2 \dots \mp2 \mp1}_{\vert j_1 \vert}\underbrace{\pm2 \dots \pm1}_{\vert j_0 \vert}\textbf{;}\underbrace{\mp2 \dots \mp2}_{\vert i_0 \vert-1} \underbrace{\mp1 \pm2 \dots \pm2}_{\vert i_1 \vert}\underbrace{\pm1 \mp2 \dots \mp2}_{\vert i_2 \vert}\dots$$}
if $\operatorname{sign}(i_0)\neq\operatorname{sign}(j_0)$, where the sign of each block is the same of the sign of $i_n$ and $j_m$. If any $j_m$ or $i_n$ has module 1, then the block associated to it will only be the respective 1, and if it has higher module you start ``adding'' 2's. To clarify the idea, let us check some examples

\begin{example}
Here we are going to code some examples just to help understand exactly how $h_i$ codes the $i$-coordinate.
\begin{itemize}
\item[(i)] $(3\oplus -2\oplus \dots, 1\oplus -4 \oplus \dots)$: The sign of $i_0$ and $j_0$ are equal then
$$h_i(3\oplus -2\oplus \dots, 1\oplus -4\oplus \dots)= \dots \underbrace{\operatorname{-2-2-2-1}}_{\vert -4 \vert}\underbrace{\operatorname{2}}_{\vert 1 \vert}; \underbrace{\operatorname{22}}_{\vert 3 \vert-1}\underbrace{\operatorname{1-2}}_{\vert -2 \vert}\dots$$

\item[(ii)] $(-1\oplus 3\oplus -1\dots, -1\oplus 2 \oplus\dots)$: Once again they have the same sign
$$h_i(-1\oplus 3\oplus -1\dots, -1\oplus 2 \oplus\dots)=\dots \underbrace{\operatorname{21}}_{\vert 2 \vert}\underbrace{\operatorname{-2}}_{\vert -1 \vert}; \underbrace{}_{\vert -1 \vert-1}\underbrace{\operatorname{-122}}_{\vert 3 \vert}\underbrace{\operatorname{1}}_{\vert -1 \vert}\dots$$

\item[(iii)] $(3\oplus -2\oplus \dots, -1\oplus 4\oplus\dots)$: Now $i_0$ and $j_0$ have different signs
$$h_i(3\oplus -2\oplus \dots, -1\oplus 4\oplus\dots)= \dots \underbrace{\operatorname{2221}}_{\vert 4 \vert}\underbrace{\operatorname{-1}}_{\vert -1 \vert}; \underbrace{\operatorname{22}}_{\vert 3 \vert-1}\underbrace{\operatorname{1-2}}_{\vert -2 \vert}\dots$$
\end{itemize}
\end{example}

Let $\sigma_i:\Sigma \rightarrow \Sigma$ be the shift map on the space of the sequences over the alphabet $\mathcal{A}$. Then

\begin{lemma}
$\sigma_i\circ h_i = h_i \circ f$
\end{lemma}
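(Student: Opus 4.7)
The plan is to verify the identity $\sigma_i\circ h_i=h_i\circ f$ by direct case analysis on the leading coordinates $(i_0,j_0)$ of $p=(i_0\oplus i_1\oplus\cdots,\,j_0\oplus j_1\oplus\cdots)$, using the piecewise description of $f$ on coordinates given at the end of the previous subsection. Since $h_i$ is built to commute with the mirroring symmetry $f(-x,-y)=-f(x,y)$ (both sides have every sign in the symbolic coding flipped), I would first reduce to $i_0>0$, leaving the four subcases (A) $i_0>1,\ j_0>0$; (B) $i_0=1,\ j_0>0$; (C) $i_0>1,\ j_0<0$; (D) $i_0=1,\ j_0<0$. In each subcase I would write down $h_i(p)$ using the appropriate ``same signs'' or ``different signs'' template, apply $\sigma_i$ to displace the semicolon one slot to the right, and match the outcome block-by-block with $h_i(f(p))$.

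For the non-transitional cases (A) and (C) the sign-agreement between $i_0$ and $j_0$ is preserved by $f$, so the same template governs both $p$ and $f(p)$. The first symbol right of the semicolon in $h_i(p)$ is a ``$\pm 2$'' drawn from the interior of the $|i_0|-1$ block of twos; applying $\sigma_i$ moves it across the semicolon. On the side of $h_i(f(p))$, case (A) shrinks the $i_0$-block by one ``$\pm 2$'' and lengthens the $j_0$-block by one ``$\pm 2$'', absorbing the shift precisely; in case (C), $f$ prepends a new leading $j$-index equal to $+1$, which manifests as a length-one $j_0'$-block consisting of exactly the relocated ``$+2$''. All other blocks are unchanged in both codings.

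Cases (B) and (D) are the transitional ones: the $i_0$-block is empty and the first symbol right of the semicolon is the leading ``$\pm 1$'' of the $i_1$-block (carrying the sign of $i_0$). Here $f$ flips the sign-agreement between the leading coordinates, so the template governing $h_i(f(p))$ is the opposite of the one governing $h_i(p)$. The crux is the observation that the terminal ``$1$'' which the new template attaches to the new $j_0$- (or $j_0'$-) block is precisely the ``$1$'' that $\sigma_i$ has displaced. Once this identification is made, the subsequent $|i_1|-1$ copies of ``$\mp 2$'' that used to begin the $i_1$-block form exactly the new $i_0'$-block, and the blocks farther out ($i_2,i_3,\dots$ on the right and $j_1,j_2,\dots$ on the left) are untouched by $f$, so they agree between $\sigma_i(h_i(p))$ and $h_i(f(p))$ symbol for symbol.

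The main obstacle, concentrated in cases (B) and (D), is the combinatorial compatibility between the ``same signs'' and ``different signs'' formulas at the moment $|i_0|=1$: one must recognize that a single ``$1$'' plays two different roles across the switch, serving as the leading symbol of an $i_1$-block under one template and as the terminal symbol of a $j_0$-block under the other. Once this matching is laid out carefully, the verification degenerates into a mechanical comparison of block lengths and signs on either side of the semicolon, which expresses exactly the fact that the coding rules of the previous subsection were engineered to intertwine $f$ with the shift.
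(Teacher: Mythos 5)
Your proposal is correct and follows essentially the same route as the paper: reduce to $i_0>0$ by the mirroring symmetry, split into the four subcases according to $i_0>1$ versus $i_0=1$ and $j_0>0$ versus $j_0<0$, and verify block-by-block that the symbol displaced across the semicolon by $\sigma_i$ is exactly the symbol that the coding of $f(p)$ appends to the (new) $j_0$-block. Your identification of the transitional cases $i_0=1$, where the displaced ``$1$'' switches roles between templates, is precisely the computation the paper carries out.
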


\begin{proof}
We will do the proof only looking at the upper plane of $\mathbb{R}^2$ due the symmetry of $f$. Hence
\begin{itemize}
\item $i_0>0 \quad j_0>0$:

Let $p=(i_0 \oplus_{n\in\mathbb{N}} i_n, j_0 \oplus_{m\in\mathbb{N}} j_m)$, then we know that
\begin{eqnarray*}
f(p)=\left\{\begin{array}{rcl}
&\left(i_0 - 1 \oplus_{n\in\mathbb{N}} i_n, j_0 + 1\oplus_{m\in\mathbb{N}} j_m\right)& \ i_0>1; \\
& \left(i_1 \oplus_{n\geq 2} i_n, j_0 + 1\oplus_{m\in\mathbb{N}} j_m\right)& \ i_0=1; 
\end{array}\right.
\end{eqnarray*}
which implies that
\begin{eqnarray*}
h_i\circ f(p)=\left\{\begin{array}{rcl}
& \dots \underbrace{\operatorname{-2\dots -2-1}}_{\vert j_1 \vert}\underbrace{\operatorname{2\dots 2}}_{\vert j_0 +1\vert}; \underbrace{\operatorname{2\dots2}}_{\vert i_0 - 1 \vert-1}\underbrace{\operatorname{1-2\dots -2}}_{\vert i_1 \vert}\dots  & \ i_0>1; \\

&\dots \underbrace{\operatorname{-2\dots -2-1}}_{\vert j_1 \vert}\underbrace{\operatorname{2\dots 21}}_{\vert j_0 +1\vert}; \underbrace{\operatorname{-2\dots -2}}_{\vert i_1 \vert-1}\underbrace{\operatorname{-12\dots 2}}_{\vert i_2 \vert}\dots& \ i_0=1; 
\end{array}\right.
\end{eqnarray*}
once we have alternating signs for $i_0$ and $i_1$. Also, we know that
\begin{eqnarray*}
h_i(p)=\left\{\begin{array}{rcl}
& \dots \underbrace{\operatorname{-2\dots -2-1}}_{\vert j_1 \vert}\underbrace{\operatorname{2\dots 2}}_{\vert j_0\vert}; \underbrace{\operatorname{2\dots2}}_{\vert i_0 \vert-1}\underbrace{\operatorname{1-2\dots -2}}_{\vert i_1 \vert}\dots  & \ i_0>1; \\

&\dots \underbrace{\operatorname{-2\dots -2-1}}_{\vert j_1 \vert}\underbrace{\operatorname{2\dots 2}}_{\vert j_0\vert};\underbrace{\operatorname{1-2\dots -2}}_{\vert i_1 \vert}\underbrace{\operatorname{-12\dots 2}}_{\vert i_2 \vert}\dots& \ i_0=1; 
\end{array}\right.
\end{eqnarray*}

Now applying the shift we get
\begin{eqnarray*}
\sigma_i \circ h_i(p)=\left\{\begin{array}{rcl}
& \dots \underbrace{\operatorname{-2\dots -2-1}}_{\vert j_1 \vert}\underbrace{\operatorname{2\dots 2}}_{\vert j_0 \vert +1}; \underbrace{\operatorname{2\dots2}}_{\vert i_0 \vert-2}\underbrace{\operatorname{1-2\dots -2}}_{\vert i_1 \vert}\dots  & \ i_0>1; \\

&\dots \underbrace{\operatorname{-2\dots -2-1}}_{\vert j_1 \vert}\underbrace{\operatorname{2\dots 21}}_{\vert j_0 \vert +1}; \underbrace{\operatorname{-2\dots -2}}_{\vert i_1 \vert-1}\underbrace{\operatorname{-12\dots 2}}_{\vert i_2 \vert}\dots& \ i_0=1; 
\end{array}\right.
\end{eqnarray*}
and proves the statement in these cases.

\item $i_0>0 \quad j_0<0$:

The proof here is basically the same, we only change how we apply $f$
\begin{eqnarray*}
f\left(p\right)=\left\{\begin{array}{rcl}
&\left(i_0 - 1 \oplus_{n\in\mathbb{N}} i_n, 1\oplus j_0 \oplus_{m\in\mathbb{N}} j_m\right)& \ i_0>1 \\
& \left(i_1 \oplus_{n\geq 2} i_n, 1 \oplus j_0 \oplus_{m\in\mathbb{N}} j_m\right)& \ i_0=1;
\end{array}\right.
\end{eqnarray*}
and then
\begin{eqnarray*}
h_i\circ f(p)=\left\{\begin{array}{rcl}
& \dots \underbrace{\operatorname{2\dots 21}}_{\vert j_1 \vert}\underbrace{\operatorname{-2\dots -2-1}}_{\vert j_0\vert} \underbrace{2}_1 ; \underbrace{\operatorname{2\dots2}}_{\vert i_0 - 1 \vert-1}\underbrace{\operatorname{1-2\dots -2}}_{\vert i_1 \vert}\dots  & \ i_0>1; \\

&\dots \underbrace{\operatorname{2\dots 21}}_{\vert j_1 \vert}\underbrace{\operatorname{-2\dots -2-1}}_{\vert j_0\vert}\underbrace{1}_1; \underbrace{\operatorname{-2\dots -2}}_{\vert i_1 \vert-1}\underbrace{\operatorname{-12\dots 2}}_{\vert i_2 \vert}\dots& \ i_0=1; 
\end{array}\right.
\end{eqnarray*}

Also we have that
\begin{eqnarray*}
h_i(p)=\left\{\begin{array}{rcl}
& \dots \underbrace{\operatorname{2\dots 21}}_{\vert j_1 \vert}\underbrace{\operatorname{-2\dots -2-1}}_{\vert j_0\vert}; \underbrace{\operatorname{2\dots2}}_{\vert i_0 \vert-1}\underbrace{\operatorname{1-2\dots -2}}_{\vert i_1 \vert}\dots  & \ i_0>1; \\

&\dots \underbrace{\operatorname{2\dots 21}}_{\vert j_1 \vert}\underbrace{\operatorname{-2\dots -2-1}}_{\vert j_0\vert};\underbrace{\operatorname{1-2\dots -2}}_{\vert i_1 \vert}\underbrace{\operatorname{-12\dots 2}}_{\vert i_2 \vert}\dots& \ i_0=1; 
\end{array}\right.
\end{eqnarray*}
and applying the shift
\begin{eqnarray*}
\sigma_i \circ h_i(p)=\left\{\begin{array}{rcl}
& \dots \underbrace{\operatorname{2\dots 21}}_{\vert j_1 \vert}\underbrace{\operatorname{-2\dots -2-1}}_{\vert j_0\vert}\underbrace{\operatorname{2}}_{1}; \underbrace{\operatorname{2\dots2}}_{\vert i_0 \vert-2}\underbrace{\operatorname{1-2\dots -2}}_{\vert i_1 \vert}\dots  & \ i_0>1; \\

&\dots \underbrace{\operatorname{2\dots 21}}_{\vert j_1 \vert}\underbrace{\operatorname{-2\dots -2-1}}_{\vert j_0\vert}\underbrace{\operatorname{1}}_{1};\underbrace{\operatorname{-2\dots -2}}_{\vert i_1 \vert-1}\underbrace{\operatorname{-12\dots 2}}_{\vert i_2 \vert}\dots& \ i_0=1; 
\end{array}\right.
\end{eqnarray*}
\end{itemize}

Therefore putting together both items above, we conclude the Lemma's proof.
\end{proof}

The last thing regarding the $i$-coordinate coding that is needed to be discussed is how to code the point that have finite orbit foreword or backward. As we discussed before, this happens if you are on a pre-image of $\{y=0\}$ or on an image of the $\{y=-x\}$, which implies that you have a finite $i$ of $j$-coordinate. In this case, you just use the $h_i$ defined and when you reach the ``final'' number you just put 0 in the next step and cease to code. These point will have a finite coding backward or forward.

\begin{example}
Each one of the examples below has different type of finite orbit. To fully understand what is happening here, try to visualise the geometric interpretation of the finite orbit.
\begin{itemize}
\item[(i)] $(3, 1\oplus -4 \oplus \dots)$: This point lies over the $f^{-3}(\{y=0\})$ but it is not over any image of $\{y=-x\}$
$$h_i(3, 1\oplus -4\oplus \dots)= \dots \underbrace{\operatorname{-2-2-2-1}}_{\vert -4 \vert}\underbrace{\operatorname{2}}_{\vert 1 \vert}; \underbrace{\operatorname{22}}_{\vert 3 \vert-1}0$$

\item[(ii)] $(3\oplus -2\oplus \dots, 1\oplus -4)$: The point here is over $f^4(\{y=-x\})$ but it is not over any pre-images of $\{y=0\}$
$$h_i(3\oplus -2\oplus \dots, 1\oplus -4)= 0\underbrace{\operatorname{-2-2-2-1}}_{\vert -4 \vert}\underbrace{\operatorname{2}}_{\vert 1 \vert}; \underbrace{\operatorname{22}}_{\vert 3 \vert-1}\underbrace{\operatorname{1-2}}_{\vert -2 \vert}\dots$$

\item[(iii)] $(3, 1\oplus -4)$: This one lies over one of the intersections between $f^{-3}(\{y=0\})$ and $f^4(\{y=-x\})$
$$h_i(3, 1\oplus -4)= 0\underbrace{\operatorname{-2-2-2-1}}_{\vert -4 \vert}\underbrace{\operatorname{2}}_{\vert 1 \vert}; \underbrace{\operatorname{22}}_{\vert 3 \vert-1}0$$
\end{itemize}
\end{example}

\subsubsection{Coding the $j$-coordinate}

The $j$-coordinate will have the same kind of coding and, in fact, it is possible to see a direct relation between both coordinates. They have an strict relation and it will become very clear once we define the other map.

To code the $j$-coordinate, let  $p=(i_0 \oplus_{n\in\mathbb{N}} i_n, j_0 \oplus_{m\in\mathbb{N}} j_m)$ be a point with full orbit, the definition of $h_j(p)$ is once again split depending on the sign of $i_0$ and $j_0$:
{\tiny
$$\dots \underbrace{\pm1 \pm2 \dots \pm2}_{\vert j_2 \vert}\underbrace{\mp1 \mp2 \dots \mp2}_{\vert j_1 \vert}\underbrace{\pm1 \pm2 \dots \textbf{;} \pm2}_{\vert j_0 \vert}\underbrace{\pm2 \dots \pm2}_{\vert i_0 \vert} \underbrace{\mp1 \mp2 \dots \mp2}_{\vert i_1 \vert}\underbrace{\pm1 \pm2 \dots \pm2}_{\vert i_2 \vert}\dots$$}
if $\operatorname{sign}(i_0)=\operatorname{sign}(j_0)$ and
{\tiny
$$\dots \underbrace{\pm1 \pm2 \dots \pm2}_{\vert j_2 \vert}\underbrace{\mp1 \mp2 \dots \mp2}_{\vert j_1 \vert}\underbrace{\pm1 \pm2 \dots \textbf{;} \pm2}_{\vert j_0 \vert}\underbrace{\pm 1 \pm2 \dots \pm2}_{\vert i_0 \vert} \underbrace{\mp1 \mp2 \dots \mp2}_{\vert i_1 \vert}\underbrace{\pm1 \pm2 \dots \pm2}_{\vert i_2 \vert}\dots$$}
if $\operatorname{sign}(i_0)\neq\operatorname{sign}(j_0)$, where the sign of each block is the same of the sign of $i_n$ and $j_m$.Keep in mind that if any $j_m$ or $i_n$ has module 1, then the block associated to it will only be the respective 1. Lets look once more to the examples we presented before, but now under the $j$-perspective:

\begin{example}
Here we are going to code some examples just to help understand exactly how $h_j$ codes the $j$-coordinate.
\begin{itemize}
\item[(i)] $(3\oplus -2\oplus \dots, 1\oplus -4 \oplus \dots)$: The sign of $i_0$ and $j_0$ are equal then
$$h_j(3\oplus -2\oplus \dots, 1\oplus -4\oplus \dots)= \dots \underbrace{\operatorname{-1-2-2-2}}_{\vert -4 \vert}\textbf{;}\underbrace{\operatorname{1}}_{\vert 1 \vert} \underbrace{\operatorname{222}}_{\vert 3 \vert}\underbrace{\operatorname{-1-2}}_{\vert -2 \vert}\dots$$

\item[(ii)] $(-1\oplus 3\oplus -1\dots, -1\oplus 2 \oplus\dots)$: Once again they have the same sign
$$h_j(-1\oplus 3\oplus -1\dots, -1\oplus 2 \oplus\dots)=\dots \underbrace{\operatorname{12}}_{\vert 2 \vert}\textbf{;}\underbrace{\operatorname{-1}}_{\vert -1 \vert} \underbrace{-2}_{\vert -1 \vert}\underbrace{\operatorname{122}}_{\vert 3 \vert}\underbrace{\operatorname{-1}}_{\vert -1 \vert}\dots$$

\item[(iii)] $(3\oplus -2\oplus \dots, -1\oplus 4\oplus\dots)$: Now $i_0$ and $j_0$ have different signs
$$h_j(3\oplus -2\oplus \dots, -1\oplus 4\oplus\dots)= \dots \underbrace{\operatorname{1222}}_{\vert 4 \vert}\textbf{;}\underbrace{\operatorname{-1}}_{\vert -1 \vert} \underbrace{\operatorname{122}}_{\vert 3 \vert-1}\underbrace{\operatorname{-1-2}}_{\vert -2 \vert}\dots$$
\end{itemize}
\end{example}

Let $\sigma_j:\Sigma \rightarrow \Sigma$ be the shift map on the space of the sequences over the alphabet $\mathcal{A}$. Then

\begin{lemma}
$\sigma_j\circ h_j = h_j \circ f$
\end{lemma}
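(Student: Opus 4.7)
The plan is to mimic the proof of the previous lemma for $h_i$, verifying the identity case by case on the signs of $i_0$ and $j_0$ and on whether $|i_0|=1$ or $|i_0|>1$ (the condition that selects which branch of the dynamics $f$ takes). By the antipodal symmetry $f(-x,-y)=-f(x,y)$, which simultaneously flips every sign in both the coordinate expansion and in the template defining $h_j$, it suffices to treat the two sub-cases $i_0>0,\ j_0>0$ and $i_0>0,\ j_0<0$, each with the two branches $i_0>1$ and $i_0=1$.

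In each case I would write the explicit block decomposition of $h_j(p)$ from the appropriate (same-sign or different-sign) template, apply $\sigma_j$ to move the distinguished cursor one symbol to the right, and compare to the block decomposition of $h_j(f(p))$ obtained from the explicit transformation rule on $(i_0\oplus\dots,\,j_0\oplus\dots)$. The matching is driven by two observations on the $j$-side. When $j_0>0$, $f$ sends $j_0\mapsto j_0+1$, which lengthens the $|j_0|$-block by one extra $\pm 2$ at its right end; this is precisely the symbol that $\sigma_j$ shifts across the cursor, and the template rule "cursor just before the last $\pm 2$" then places the cursor correctly inside the new $(|j_0|+1)$-block. When $j_0<0$, $f$ prepends a new $1$ with opposite sign, so the old $|j_0|$-block is promoted to the new $|j_1|$-block (its trailing role now played by the $\mp 1$ marker demanded by the template) and a length-one new $|j_0|$-block consisting of a single $\pm 1$ appears to the right of the cursor, again matching the lone symbol uncovered by the shift.

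Dually on the $i$-side, when $|i_0|>1$ the rule $i_0\mapsto i_0-1$ shortens the $|i_0|$-block by one symbol at its left end, and this is exactly the symbol the cursor swallows. When $|i_0|=1$, the new leading $i$-coordinate is $i_1$, whose sign is opposite to $i_0$, so $h_j(f(p))$ is produced by the \emph{opposite} template (same-sign vs.\ different-sign) from the one used for $h_j(p)$. A direct comparison shows that the $\pm 1$ marker distinguishing the two templates at the start of the $|i_0|$-block is precisely the symbol of the old one-symbol $|i_0|$-block that the cursor consumes, so the two sequences again agree. The edge case $|j_0|=1$ is handled in parallel: the semicolon placement rule becomes "cursor just before the single $\pm 1$", and the same bookkeeping applies.

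The main obstacle is the combinatorial bookkeeping at the template switch $|i_0|=1$: one must check that after exchanging the same-sign template for the different-sign one the $\pm 1$ and $\pm 2$ markers align correctly across the interface of the new $|j_0|$-block and the new $|i_0|$-block. This is analogous to the subtlety already resolved in the $h_i$-lemma and is settled by writing out both templates side by side and matching them symbol by symbol, as was done there.
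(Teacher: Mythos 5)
Your proposal is correct and follows essentially the same route as the paper: a case-by-case verification over the signs of $i_0,j_0$ (reduced to the upper half-plane by the antipodal symmetry) and the branches $i_0>1$ versus $i_0=1$, writing out the block templates for $h_j(p)$ and $h_j(f(p))$ and checking that the shift re-attributes exactly one symbol across the cursor. The bookkeeping you describe --- the $|j_0|$-block absorbing one $\pm2$ from the left end of the $|i_0|$-block when $j_0>0$, the new singleton $\pm1$ block when $j_0<0$, and the template switch at $i_0=1$ --- is precisely the content of the paper's displayed computations.
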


\begin{proof}
We will do the proof only looking at the upper plane of $\mathbb{R}^2$ due the symmetry of $f$. Hence
\begin{itemize}
\item $i_0>0 \quad j_0>0$:

Let $p=(i_0 \oplus_{n\in\mathbb{N}} i_n, j_0 \oplus_{m\in\mathbb{N}} j_m)$, then we know that
\begin{eqnarray*}
f(p)=\left\{\begin{array}{rcl}
&\left(i_0 - 1 \oplus_{n\in\mathbb{N}} i_n, j_0 + 1\oplus_{m\in\mathbb{N}} j_m\right)& \ i_0>1; \\
& \left(i_1 \oplus_{n\geq 2} i_n, j_0 + 1\oplus_{m\in\mathbb{N}} j_m\right)& \ i_0=1; 
\end{array}\right.
\end{eqnarray*}
which implies that
\begin{eqnarray*}
h_j\circ f(p)=\left\{\begin{array}{rcl}
& \dots \underbrace{\operatorname{-1-2\dots -2}}_{\vert j_1 \vert}\underbrace{\operatorname{12\dots \textbf{;}2}}_{\vert j_0 +1\vert} \underbrace{\operatorname{2\dots2}}_{\vert i_0 - 1 \vert}\underbrace{\operatorname{-1-2\dots -2}}_{\vert i_1 \vert}\dots  & \ i_0>1; \\

&\dots \underbrace{\operatorname{-1-2\dots -2}}_{\vert j_1 \vert}\underbrace{\operatorname{12\dots 2}}_{\vert j_0 +1\vert}\textbf{;} \underbrace{\operatorname{-1-2\dots -2}}_{\vert i_1 \vert}\underbrace{\operatorname{12\dots 2}}_{\vert i_2 \vert}\dots& \ i_0=1; 
\end{array}\right.
\end{eqnarray*}
once we have alternating signs for $i_0$ and $i_1$. Also, we know that
\begin{eqnarray*}
h_j(p)=\left\{\begin{array}{rcl}
& \dots \underbrace{\operatorname{-1-2\dots -2}}_{\vert j_1 \vert}\underbrace{\operatorname{12\dots \textbf{;}2}}_{\vert j_0\vert} \underbrace{\operatorname{2\dots2}}_{\vert i_0 \vert}\underbrace{\operatorname{-1-2\dots -2}}_{\vert i_1 \vert}\dots  & \ i_0>1; \\

&\dots \underbrace{\operatorname{-1-2\dots -2}}_{\vert j_1 \vert}\underbrace{\operatorname{12\dots \textbf{;}2}}_{\vert j_0\vert}\underbrace{2}_{\vert i_0\vert}\underbrace{\operatorname{-1-2\dots -2}}_{\vert i_1 \vert}\underbrace{\operatorname{12\dots 2}}_{\vert i_2 \vert}\dots& \ i_0=1; 
\end{array}\right.
\end{eqnarray*}

Now applying the shift we get
\begin{eqnarray*}
\sigma_j \circ h_j(p)=\left\{\begin{array}{rcl}
& \dots \underbrace{\operatorname{-1-2\dots -2}}_{\vert j_1 \vert}\underbrace{\operatorname{12\dots \textbf{;}2}}_{\vert j_0 \vert +1} \underbrace{\operatorname{2\dots2}}_{\vert i_0 \vert-1}\underbrace{\operatorname{-1-2\dots -2}}_{\vert i_1 \vert}\dots  & \ i_0>1; \\

&\dots \underbrace{\operatorname{-1-2\dots -2}}_{\vert j_1 \vert}\underbrace{\operatorname{12\dots \textbf{;}2}}_{\vert j_0 \vert +1} \underbrace{\operatorname{-1-2\dots -2}}_{\vert i_1 \vert-1}\underbrace{\operatorname{12\dots 2}}_{\vert i_2 \vert}\dots& \ i_0=1; 
\end{array}\right.
\end{eqnarray*}
and proves the statement in these cases.

\item $i_0>0 \quad j_0<0$:

The proof here is basically the same, we only change how we apply $f$
\begin{eqnarray*}
f\left(p\right)=\left\{\begin{array}{rcl}
&\left(i_0 - 1 \oplus_{n\in\mathbb{N}} i_n, 1\oplus j_0 \oplus_{m\in\mathbb{N}} j_m\right)& \ i_0>1 \\
& \left(i_1 \oplus_{n\geq 2} i_n, 1 \oplus j_0 \oplus_{m\in\mathbb{N}} j_m\right)& \ i_0=1;
\end{array}\right.
\end{eqnarray*}
and then
\begin{eqnarray*}
h_j\circ f(p)=\left\{\begin{array}{rcl}
& \dots \underbrace{\operatorname{12\dots 2}}_{\vert j_1 \vert}\underbrace{\operatorname{-1-2\dots -2}}_{\vert j_0\vert} \textbf{;}\underbrace{1}_1  \underbrace{\operatorname{2\dots2}}_{\vert i_0 -1 \vert}\underbrace{\operatorname{-1-2\dots -2}}_{\vert i_1 \vert}\dots  & \ i_0>1; \\

&\dots \underbrace{\operatorname{12\dots 2}}_{\vert j_1 \vert}\underbrace{\operatorname{-1-2\dots -2}}_{\vert j_0\vert}\textbf{;}\underbrace{1}_1 \underbrace{\operatorname{-1-2\dots -2}}_{\vert i_1 \vert}\underbrace{\operatorname{12\dots 2}}_{\vert i_2 \vert}\dots& \ i_0=1; 
\end{array}\right.
\end{eqnarray*}

Also we have that
\begin{eqnarray*}
h_j(p)=\left\{\begin{array}{rcl}
& \dots \underbrace{\operatorname{12\dots 2}}_{\vert j_1 \vert}\underbrace{\operatorname{-1-2\dots \textbf{;}-2}}_{\vert j_0\vert} \underbrace{\operatorname{12\dots2}}_{\vert i_0 \vert}\underbrace{\operatorname{-1-2\dots -2}}_{\vert i_1 \vert}\dots  & \ i_0>1; \\

&\dots \underbrace{\operatorname{12\dots 2}}_{\vert j_1 \vert}\underbrace{\operatorname{-1-2\dots \textbf{;}-2}}_{\vert j_0\vert}\underbrace{\operatorname{1}}_{\vert i_0 \vert}\underbrace{\operatorname{-1-2\dots -2}}_{\vert i_1 \vert}\underbrace{\operatorname{112\dots 2}}_{\vert i_2 \vert}\dots& \ i_0=1; 
\end{array}\right.
\end{eqnarray*}
and applying the shift
\begin{eqnarray*}
\sigma_j \circ h_j(p)=\left\{\begin{array}{rcl}
& \dots \underbrace{\operatorname{12\dots 2}}_{\vert j_1 \vert}\underbrace{\operatorname{-1-2\dots -2}}_{\vert j_0\vert}\textbf{;}\underbrace{\operatorname{1}}_{1} \underbrace{\operatorname{2\dots2}}_{\vert i_0 \vert-1}\underbrace{\operatorname{-1-2\dots -2}}_{\vert i_1 \vert}\dots  & \ i_0>1; \\

&\dots \underbrace{\operatorname{12\dots 2}}_{\vert j_1 \vert}\underbrace{\operatorname{-1-2\dots -2}}_{\vert j_0\vert}\textbf{;}\underbrace{\operatorname{1}}_{1}\underbrace{\operatorname{-1-2\dots -2}}_{\vert i_1 \vert}\underbrace{\operatorname{12\dots 2}}_{\vert i_2 \vert}\dots& \ i_0=1; 
\end{array}\right.
\end{eqnarray*}
\end{itemize}
therefore putting together both items above, we conclude the Lemma's proof.
\end{proof}

As before, we define here the image of the point with finite orbit in the exact same way as before: just add zero after using all the available $i_n$'s and $j_m$'s.


\subsection{Conjugacy and its consequences}

Each one of the coordinates identifies every time the point enters the zone of interest and how long it takes to get there. The length of each block between each $\pm1$ is how long it will take to return the region delimited by $R_1$ and $R_{-1}$ in the $i$-coordinate and $L_1$ and $L_{-1}$ in the $j$-coordinate. 

We can restate the theorem by being a bit more precise about each one of the subshifts we mentioned and also the precise map

\begin{theorem*}[{\bf \ref{teoB}'}]
Let $\Sigma_i$ and $\Sigma_j$ be the image of $h_i(\mathbb{R}^2)$ and $h_j(\mathbb{R}^2)$, respectively. Define the map
$$\begin{array}{rcl}
h:\mathbb{R}^2 &\rightarrow& \Sigma_i \times \Sigma_j \\
p &\mapsto& (h_i(p),h_j(p))
\end{array}
$$
and the following diagram commutes
\begin{displaymath}
\xymatrix{
{\mathbb{R}^2 } \ar[r]^{f} \ar[d]_{h} & {\mathbb{R}^2 \ar[d]^{h}} \\
{\Sigma_i \times \Sigma_j} \ar[r]_{\sigma}  & {\Sigma_i \times \Sigma_j}}
\end{displaymath}
where 
$$\begin{array}{rcl}
\sigma:\Sigma_i \times \Sigma_j &\rightarrow& \Sigma_i \times \Sigma_j \\
((s_n),(s_m)) &\mapsto& (\sigma_i(s_n),\sigma_j(s_m))
\end{array}
$$
\end{theorem*}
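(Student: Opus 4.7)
The plan is to derive Theorem~\ref{teoB}' as an immediate corollary of the two lemmas established in the preceding subsections, namely $\sigma_i \circ h_i = h_i \circ f$ and $\sigma_j \circ h_j = h_j \circ f$. Since both $h$ and $\sigma$ are defined componentwise on the product $\Sigma_i \times \Sigma_j$, the commutativity of the diagram should reduce to checking each factor separately, and essentially all of the real work has already been carried out.

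First, I would verify that $h$ is well-defined as a map into $\Sigma_i \times \Sigma_j$. By the definitions $\Sigma_i = h_i(\mathbb{R}^2)$ and $\Sigma_j = h_j(\mathbb{R}^2)$, one has $h_i(p) \in \Sigma_i$ and $h_j(p) \in \Sigma_j$ for every $p$, so $h(p) = (h_i(p), h_j(p))$ lies in $\Sigma_i \times \Sigma_j$. Note that $\Sigma_i \times \Sigma_j$ is the full Cartesian product, so no a~priori compatibility between the two coordinates needs to be verified at this step; in particular, $h$ is trivially surjective onto its image $h(\mathbb{R}^2) \subseteq \Sigma_i \times \Sigma_j$.

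For commutativity, fix $p \in \mathbb{R}^2$ at which $f$ is defined (that is, $p \notin \{y=0\}$). Then by the two preceding lemmas and by the componentwise definition of $\sigma$,
$$h \circ f(p) \;=\; (h_i(f(p)), h_j(f(p))) \;=\; (\sigma_i(h_i(p)), \sigma_j(h_j(p))) \;=\; \sigma(h_i(p), h_j(p)) \;=\; \sigma \circ h(p),$$
which is precisely the content of the diagram. This is the entirety of the argument; no new case analysis is needed beyond what was performed in the proofs of the two componentwise lemmas.

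The only minor subtlety concerns points with non-full orbit — those lying on some $f^{-n}(\{y=0\})$ or $f^{n}(\{y=-x\})$ — where one of the coordinates terminates and the associated coding ends (resp.\ begins) with a $0$. Here one must check that, under an application of $f$, the $0$-marker shifts by one position toward the semicolon in exactly the manner prescribed by $\sigma_i$ and $\sigma_j$, which is built into the definition of $h_i$ and $h_j$ on terminating sequences. I do not expect any obstacle at this step: the behavior of the terminating codings was established precisely so that the shift acts compatibly, and the main body of the verification has already been handled case by case in the proofs of the two lemmas.
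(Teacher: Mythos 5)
Your proposal is correct and matches the paper's treatment: the paper offers no separate argument for Theorem \ref{teoB}', deriving it directly from the two componentwise lemmas $\sigma_i\circ h_i=h_i\circ f$ and $\sigma_j\circ h_j=h_j\circ f$ together with the componentwise definitions of $h$ and $\sigma$, exactly as you do. Your remark about the terminating (zero-marked) codings is likewise handled in the paper only by the brief convention of appending $0$ after the available $i_n$'s and $j_m$'s are exhausted, so no further work is expected there either.
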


\vspace{.3cm}

One can be precise when restricted to the points where the full orbit is defined, that is, the set of points in $\mathbb{R}^2$ which orbit never meets the discontinuities: 
{\footnotesize
$$\Sigma_k' = h_k\left( \mathbb{R}^2 \setminus \left(\left(\bigcup_{\{n\in\mathbb{N}\}}f^{-n}(\{y=0\}\right)\bigcup \left(\bigcup_{\{n\in\mathbb{N}\}}f^{n}(\{y=-x\}\right) \right) \right)$$}
for $k=i,j$. This tells us that each $\Sigma_i', \Sigma_j' \subset \{-2,-1,1,2\}^{\mathbb{Z}}$ and that  $h$ is continuous restricted to this set.

Which gives this immediate consequence.

\begin{corollary*}
The H\'{e}non-Devaney map has a density of hyperbolic periodic points.
\end{corollary*}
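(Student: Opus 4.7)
The plan is to combine Theorem~\ref{teoB}' with two classical ingredients: density of periodic orbits in subshifts of finite type, and the hyperbolicity of products in $\mathrm{SL}(2,\mathbb{R})$ whose factors have strictly positive entries after a fixed conjugation.

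First I would establish density of periodic orbits for $\sigma$ on $\Sigma_i' \times \Sigma_j'$. The admissibility rules described in Section~\ref{Subshift} for each factor amount to finite-memory constraints on the alternating-sign block structure, so each factor is a topologically transitive subshift of finite type and the product inherits the same property. Density of periodic orbits of $\sigma$ is then standard, and one also has: for every admissible finite window there is a periodic admissible sequence extending it.

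Second I would lift a periodic $s \in \Sigma_i' \times \Sigma_j'$ of period $k$ to a periodic point of $f$. For each $n \geq 1$, consider the rectangle $R_n := h^{-1}([s_{-n} \cdots s_n])$: by the construction in Section~2.2 each $\overline{R_n}$ is a compact region bounded by four curves drawn from the two transversal laminations $\{R_i\}$ and $\{L_j\}$, and the $\overline{R_n}$ form a decreasing nested sequence. Since $\sigma^k s = s$, the semi-conjugacy $h \circ f = \sigma \circ h$ gives $f^k(\overline{R_n}) \supseteq \overline{R_n}$ in a Markov-partition sense, and a Brouwer-type fixed point argument on this nested sequence produces a point $p_s \in \bigcap_n \overline{R_n}$ with $f^k(p_s) = p_s$.

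Third I would verify hyperbolicity of each $p_s$ by a direct cocycle computation. One computes
\[ Df(x,y) = \begin{pmatrix} 1 & -1/y^2 \\ -1 & 1 + 1/y^2 \end{pmatrix}, \qquad \det Df \equiv 1, \]
and conjugation by $M = \operatorname{diag}(1,-1)$ transforms $Df(x,y)$ into the strictly positive $\mathrm{SL}(2,\mathbb{R})$ matrix $M^{-1} Df(x,y) M = \begin{pmatrix} 1 & 1/y^2 \\ 1 & 1+1/y^2 \end{pmatrix}$. Any strictly positive matrix $\begin{pmatrix} a & b \\ c & d \end{pmatrix} \in \mathrm{SL}(2,\mathbb{R})$ satisfies $ad = 1 + bc > 1$ and hence $a + d \geq 2\sqrt{ad} > 2$; since products of strictly positive matrices remain strictly positive, one obtains $\operatorname{tr} Df^k(p_s) > 2$, so $Df^k(p_s)$ has real eigenvalues $\lambda > 1 > \lambda^{-1} > 0$ and $p_s$ is hyperbolic. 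Density in $\mathbb{R}^2$ then follows: given $q \in \mathbb{R}^2$ and $\varepsilon > 0$, pick $n$ so that the rectangle $R_n$ containing $q$ has diameter below $\varepsilon$; topological transitivity furnishes an admissible periodic sequence whose cylinder is contained in $R_n$, and the corresponding $p_s$ from step two lies within $\varepsilon$ of $q$.

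The main obstacle is step two: Theorem~\ref{teoB}' provides only surjectivity of $h$, and the fact that the nested rectangles $\overline{R_n}$ shrink to single points is left as a conjecture. Making the fixed point argument rigorous therefore requires extracting enough topological information about the decreasing sequence $\overline{R_n}$ (compactness, $f^k$-Markov covering, non-triviality of the intersection) without appealing to the injectivity of $h$. The strict cone contraction produced in step three is exactly the hyperbolic ingredient one would use to push this through, and is closely tied to the forthcoming joint work with Pujals already cited in the paper.
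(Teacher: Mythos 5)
Your step three is correct and is the real mathematical content of the proposal: $\det Df\equiv 1$, and after conjugating by $\operatorname{diag}(1,-1)$ every factor of the cocycle along an orbit avoiding $\{y=0\}$ becomes a strictly positive matrix, so any product has trace greater than $2$ and every periodic point of $f$ is automatically a saddle. (Note in passing that a single factor already has trace $2+1/y^{2}>2$, so what your computation actually uses is not the ``parabolic'' remark from the introduction but the preserved positive cone.) The genuine gap is in step two, and your own closing paragraph concedes it: the paper only establishes that $h$ is a continuous surjection on the full-orbit set, and a semi-conjugacy in that direction does not transport periodic points backwards. The fibre $h^{-1}(s)$ over a $\sigma^{k}$-fixed sequence is a nonempty closed set satisfying $f^{k}(h^{-1}(s))=h^{-1}(s)$, but a homeomorphism of a continuum onto itself need not have a fixed point (a rotation of an annulus is the standard obstruction), so a ``Brouwer-type'' argument requires knowing that the fibre is a topological disc mapped suitably into itself, or that it is a single point --- and the latter is precisely what the paper explicitly leaves as a conjecture. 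The density claim has the same dependency: choosing $n$ so that $\overline{R_n}$ has diameter less than $\varepsilon$ presupposes that the nested quadrilaterals shrink to points, and even their compactness (the outer strips are unbounded, and the bounding curves run off to infinity) is asserted rather than checked.

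For comparison: the paper offers no proof of this corollary at all --- it is presented as an ``immediate consequence'' of Theorem~\ref{teoB} --- and your analysis correctly shows that it is not immediate from surjectivity alone. The natural way to close the argument is to upgrade your step three from a pointwise trace estimate to an invariant cone field: strict positivity of the conjugated cocycle gives a pair of transversal expanding and contracting cone fields adapted to the two laminations, uniformly on compact sets bounded away from $\{y=0\}$, which would force the diameters of the $\overline{R_n}$ to zero, prove the conjectured injectivity of $h$, and then let both existence and density of the (automatically hyperbolic, by your computation) periodic points follow from density of periodic sequences in the subshift. As written, the proposal assembles the right ingredients but does not yet constitute a proof; the missing piece is exactly the one the paper defers to the forthcoming work with Pujals.
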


The coding we introduce here can be seen as a two-dimensional version of the one for the Boole's map. The construction follows the same idea as before, that is, looking at the pre-images of the discontinuity. 

We introduce a new system of coordinates in the real line, just like the $i$-coordinate. Let $\{R_i\}_{i\geq0}$ be the points given the highest $i$-pre-image of $\{x=0\}$. The mirrored points will be denoted by  $\{R_i\}_{i<0}$. Each interval $(R_{i-1},R_{i})$ will only be denoted as $i$.

Understanding how the pre-images spread throughout the real line is a fundamental part of the results presented in \cite{Adler}, therefore the analogous of Lemma \ref{Lemma1} is already known. Actually here is the main difference between the statements because here we do have the density of pre-images. Hence, the map $h$ we are going to construct is indeed a bijection. 

The points inside $i=1$ that are induced by the pre-images of the $\{R_i\}_{i<0}$, that is, it is a family of points $R_{1\oplus i_1}$ contained in the interval $i=1$ such that $B(R_{1\oplus i_1})\subset R_{i_1}, i_1\in\mathbb{Z}^-$, where $R_{1\oplus i_1} = B^{-1}(R_{i_1})$.

Proceeding in the same way as before, we get a set of points in the real line given by 
$$\{R_{i_0 \oplus_{n\in\mathbb{N}} i_n}; \mbox{ where } i_n\in \mathbb{Z} \mbox{ of alternating signs}\}$$
and the Boole's map can be seen as
{
\begin{eqnarray*}
B\left(i_0 \oplus_{n\in\mathbb{N}} i_n \right)=\left\{\begin{array}{rcl}
&\left(i_0 - 1 \oplus_{n\in\mathbb{N}} i_n \right)& \ i_0>1; \\
& \left(i_1 \oplus_{n\geq 2} i_n\right)& \ i_0=1; 
\end{array}\right.
\end{eqnarray*}}

The symbol $1$ is given to a point that is in $(0,1)$, $2$ to the points in $(1,\infty)$. Analogously to -$1$ and $-2$. The $0$ is given once again to the points that is one of the pre-images of zero.

For any point $p\in\mathbb{R}$, the positive sequence associated to it is given by
$$(\underbrace{s_{i_{0}}}_{p} \underbrace{s_{i_1}}_{f(p)}\underbrace{s_{i_2}}_{f^2(p)}\dots) $$
where $s_{i_n}$ represents the symbol that has to be associated to the $i$-coordinate at the instant $B^{i_n}(p)$.

The process we explained in the previous section allows us to give get a similar coding to the classical Boole, that looks like a restriction of the Hénon-Devaney and it is only defined for positive time.

\begin{corollary*}
There exists $\Sigma_i \subset \Sigma$ and a bijective map $h:\mathbb{R} \rightarrow \Sigma_i$ such that the following diagram commutes
\begin{displaymath}
\xymatrix{
{\mathbb{R}} \ar[r]^{B} \ar[d]_{h} & {\mathbb{R} \ar[d]^{h}} \\
{\Sigma_i} \ar[r]_{\sigma}  & {\Sigma_i}}
\end{displaymath}
where $\sigma:\Sigma_i\rightarrow \Sigma_i$ is the usual shift and
{\footnotesize
$$\Sigma := \bigcup_{n \in \overline{\mathbb{N}}\cup\{0\}} \{(s_0,s_1,\dots,s_n) ; s_i\in\{ -2,-1,1,2\}, s_n=0\}$$}
\end{corollary*}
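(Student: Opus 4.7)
The plan is to carry over the construction developed for the Hénon-Devaney map to one dimension and then upgrade the resulting semi-conjugacy to a genuine bijection by invoking the stronger density available on the real line.

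First I would record the one-dimensional analogue of Lemma \ref{Lemma1}: the set $\bigcup_{n\ge 0} B^{-n}(\{0\})$ is dense in $\mathbb{R}$. This is precisely the content of the analysis of Boole's map in \cite{Adler}, and it plays here the role that the density/transversality of the two laminations plays in the planar construction. With this in hand the nested family of intervals $R_{i_0\oplus i_1 \oplus \cdots \oplus i_k}$ has lengths tending to zero, so for every admissible infinite alternating-sign sequence the intersection $\bigcap_k R_{i_0\oplus\cdots\oplus i_k}$ consists of a single real number.

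Next I would define $h(p)$ to be the symbolic sequence obtained from the coordinates of $p$ by exactly the block-by-block rule used for the $i$-coordinate of the Hénon-Devaney map: an initial block of length $|i_0|$ made out of $\pm 2$'s, followed for each $n\ge 1$ by a $\pm 1$ (marking the return to the region $R_1\cup R_{-1}$) and then $|i_n|-1$ copies of $\mp 2$; if $p$ lies over a pre-image of $0$ the sequence terminates with a single $0$, exactly as in the planar case. The commutation $\sigma\circ h=h\circ B$ is then a direct one-dimensional reading of the verification already carried out for $h_i$: the only rule required is
\begin{eqnarray*}
B(i_0\oplus_{n\in\mathbb{N}}i_n) = \left\{\begin{array}{rcl}
& (i_0-1\oplus_{n\in\mathbb{N}}i_n) & i_0>1; \\
& (i_1\oplus_{n\ge 2} i_n) & i_0=1;
\end{array}\right.
\end{eqnarray*}
and the combinatorics by which a block of $\pm 2$'s shortens by one and the next $\pm 1$ becomes exposed is identical to what was done before.

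The main substantive point, and where Boole differs from Hénon-Devaney, is that $h$ is actually a bijection rather than merely surjective. Injectivity follows from density: if $p\ne q$ then some $B^{-n}(0)$ lies strictly between them, so some coordinate $i_k$ already differs, hence the two codings differ in a finite prefix. Surjectivity is by construction, since decoding any admissible symbolic sequence produces a well-defined alternating-sign coordinate sequence whose nested intervals, by the first step, intersect in exactly one point (a pre-image of $0$ in the terminating case). The classical density result of Adler and Weiss is thus precisely the ingredient that closes, in dimension one, the gap that remains conjectural for the planar map.
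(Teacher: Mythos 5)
Your proposal follows essentially the same route as the paper: the paper likewise transfers the $i$-coordinate construction of the planar case to the real line verbatim (the intervals $R_{i_0\oplus i_1\oplus\cdots}$, the transition rule for $B$, the block coding with a terminal $0$ on pre-images of the origin) and invokes the density of $\bigcup_{n\ge 0}B^{-n}(\{0\})$ from \cite{Adler} as the single ingredient that upgrades the planar surjectivity to a bijection here. Your explicit remarks on injectivity via separating pre-images and on the nested intervals shrinking to points merely spell out what the paper leaves implicit.
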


Our initial goal with this coding was trying to get some tools walking towards the recurrence of the H\'{e}non-Devaney map, however we managed to get something a bit weaker than that. With this coding we can only get ``density of recurrence'' in the sense that, given an open set $R$ in the plane we can find a dense of orbits that enters $R$ in finite time, even more, we can determine in which time we want the point enters the region. As a consequence of this fact, we also get that there exists a orbit which is dense in the plane. Some more information will be given in the upcoming paper \cite{Pujals2}, a joint work with Ernique Pujals.



\end{document}